\font\elevensf=cmss10 scaled\magstephalf
\newtheorem{theorem} {{\elevensf THEOREM}}[section]
\newtheorem{proposition} {{\elevensf PROPOSITION}}[section]
\newtheorem{lemma} {{\elevensf LEMMA}}[section]
\newtheorem{remark} {{\elevensf REMARK}}[section]
\newtheorem{definition} {{\elevensf DEFINITION}}[section]
\def\R{{I\!\!R}}
\def\N{{I\!\!N}}
\def\CC{{\rm \kern.24em \vrule width.02em height1.4ex depth-.05ex \kern-.26emC}}
\def\TagOnRight
\def\AA{{it I} \hskip-3pt{\tt A}}
\def\QQ{\rlap {\raise 0.4ex \hbox{$\scriptscriptstyle |$}} {\hskip -0.1em Q}}
\newcommand{\be} {\begin{equation}}
\newcommand{\ee} {\end{equation}}
\newcommand{\bea} {\begin{eqnarray}}
\newcommand{\eea} {\end{eqnarray}}
\newcommand{\Bea} {\begin{eqnarray*}}
\newcommand{\Eea} {\end{eqnarray*}}
\newcommand{\pa} {\partial}
\newcommand{\al} {\alpha}
\newcommand{\de} {\delta}
\newcommand{\Ga} {\Gamma}
\newcommand{\Om} {\Omega}
\newcommand{\De} {\Delta}
\newcommand{\la} {\lambda}
\newcommand{\ta} {\theta}
\newcommand{\noi} {\noindent}
\newcommand{\na} {\nabla}
\newcommand{\var} {\varepsilon}
\def\theequation{\@arabic{\c@section}.\@arabic{\c@equation}}
\begin{document}
\baselineskip12pt
\parskip10pt
\parindent.4in
\catcode`\@=11

\begin{center}
{\large \bf  A  Reduction Method for Semilinear Elliptic Equations and Solutions Concentrating  on  Spheres}\footnote{Work supported by PRIN-2009-WRJ3W7 grant.} \\[3mm]
by \\[3mm]
Filomena  Pacella \\

Dipartimento di Matematica, Universit$\bar{\rm a}$ di Roma \lq\lq Sapienza"\\
P.le A. Moro 2-00185, Roma -Italy\\
E-mail: pacella@mat.uniroma1.it\\[3mm]
and \\[3mm] P.N. Srikanth\\

TIFR-CAM, Sharadanagar,  Chikkabommasandra, Bangalore -560 065 \\
E-mail:srikanth@math.tifrbng.res.in \end{center}
\begin{abstract}
 We show that any general  semilinear  elliptic problem with  Dirichlet or Neumann boundary conditions  in an annulus
$A \subseteq \R^{2m}, m\geq 2,$ invariant by the action of a certain symmetry group can be reduced to a
nonhomogenous similar problem in an annulus $D \subset \R^{m+1},$ invariant by another related  symmetry. We apply
this result to prove the existence of positive and sign  changing solutions of a singularly perturbed elliptic problem in $A$ which
concentrate on one or two $(m-1)$ dimensional spheres. We also prove that the Morse indices of these solutions  tend to
infinity as the parameter of concentration tends to infinity. \end{abstract}

{\bf 2010 AMS Classification:} 35J61, 35B25, 35B40

{\bf Keywords and Phrases:}
Semilinear  Elliptic Equations,  Symmetry, concentration  phenomena.

\section{Introduction}

In this paper we propose a method to reduce a  semilinear elliptic problem of the type~:
\be \left\{  \begin{array}{lllll}
 -\Delta u  =  f (u)  &\mbox{ in }&A \subseteq \R^{2m} \\[2mm]
u = 0  \  \  or \ \ \displaystyle{  \frac{\pa u}{\pa  \nu}} =0  &\mbox{ on } \pa A\end{array} \right.\ee
where $A$ is an annulus in $\R^{2m}, m \geq 2, $
$$ A =\{ x \in \R^{2m}:  \quad  a < |x| < b,   \ \ \  0 < a < b\}  $$ and  $f $ is a $C^{1, \al}$   nonlinearity, to the
semilinear elliptic problem:
\be
\left\{ \begin{array}{llll} -\De  v= \displaystyle{ \frac{f (v)}{2 |z|}}  &\mbox{ in } & D\subseteq \R^{m+1}  \\[2mm]
v=0  \mbox{ or } \displaystyle{\frac{\pa v}{\pa  \nu}} =0 &\mbox{ on } \pa D \end{array} \right. \ee
 where $z \in  \R^{m+1} $ and $D$ is the annulus
$$D =\left\{ z \in \R^{m+1}  \ : \ \frac{ a^2}{2} < |z| < \frac{b^2}{2}  \right\} $$
As  will be clear from the construction, there will be a one to one correspondence between solutions of (1.1)   invariant under
 the  action of a symmetry group in $H_0^1 (A)$ or  $H^1 (A)$  and solutions of (1.2) invariant under another symmetry in $H^1_{0} (D) $ or
 $H^1 (D).$

\par More precisely, writing $x \in \R^{2 m} $ as  $x = (y_1, y_2), y_i \in \R^m, i =1, 2.$  we consider
solutions $u$ of (1.1) which are radially symmetric in $y_1$ and $y_2$  i.e  $u(x) = w (|y_1|,  |y_2|)$ and
solutions  $v$   of   (1.2) which are  axially  symmetric i.e. $v (z) = h (|z|, \varphi)$ with   $\varphi  = \arccos  \left(
\frac{z}{|z|} \cdot p\right) $  for a unit vector $p \in  \R^{m+1}. $

\par Since  the domains   are annuli, by standard regularity  theory all solutions we consider are classical  $C^{2,\alpha}$-
solutions. We  set:
$$ \begin{array}{lllll}
 X = \left\{ u \in  C^{2, \alpha}  (\overline{A})\ : \  u  (x) = w (|y_1|, |y_2|) \right\}  \\ [2mm]
Y= \displaystyle{\left\{ v \in C^{2, \alpha} (\overline{D}) \ : \ v  \mbox{ axially  symmetric } \right\}}  \end{array}$$

 Our result is the following :

\begin{theorem}
 There is a  bijective correspondence  between solutions of (1.1) in $X$ and  solutions of
(1.2) in $Y.$
\end{theorem}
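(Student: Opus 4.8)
The plan is to realize the correspondence through a generalized Hopf-type change of variables. Writing $x=(y_1,y_2)\in\R^m\times\R^m$ and setting $s=|y_1|$, $t=|y_2|$, every $u\in X$ has the form $u(x)=w(s,t)$; writing $z\in\R^{m+1}$ in polar-type coordinates $z=r\om$, $\om\in S^m$, with colatitude $\va=\arccos(\om\cdot p)$, every $v\in Y$ has the form $v(z)=h(r,\va)$. The link between the two sets of coordinates is
\[
r=|z|=\frac{|x|^2}{2}=\frac{s^2+t^2}{2},\qquad \cos\va=\frac{s^2-t^2}{s^2+t^2},
\]
equivalently $s^2=|z|+z\cdot p$, $t^2=|z|-z\cdot p$, and $\va=2\arctan(t/s)$. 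Under this dictionary $a<|x|<b$ corresponds exactly to $a^2/2<|z|<b^2/2$, so $A$ is carried onto $D$ with the two boundary spheres matching, the open reduced domains $\{0<s,\,0<t,\,a^2<s^2+t^2<b^2\}$ and $\{a^2/2<r<b^2/2,\,0<\va<\pi\}$ correspond by an honest smooth diffeomorphism, and the degenerate loci $\{y_1=0\}\cup\{y_2=0\}$ in $A$ correspond to the poles $\va=0,\pi$, i.e. the segment of the axis $\R p$ lying in $D$. I would then define $T\colon X\to Y$ by keeping the same values in the new coordinates, i.e. for $u(x)=w(s,t)$,
\[
(Tu)(z)=w\!\left(\sqrt{|z|+z\cdot p},\ \sqrt{|z|-z\cdot p}\right),
\]
and $S\colon Y\to X$ by $(Sv)(x)=h\!\left(|x|^2/2,\ 2\arctan(|y_2|/|y_1|)\right)$ (value $\pi$ when $y_1=0$).

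Next I would verify that $T$ and $S$ are well defined, i.e. that $Tu\in Y$ and $Sv\in X$. That $Tu$ is axially symmetric and $Sv$ radial in $y_1$ and in $y_2$ is immediate, since $Tu$ depends on $z$ only through $(|z|,z\cdot p)$ and $Sv$ on $x$ only through $(|y_1|,|y_2|)$. On the complement of the degenerate loci the change of variables is a smooth diffeomorphism of the reduced domains, so there $Tu$ is $C^{2,\al}$ and classically solves the transformed equation as soon as $u$ does; moreover $Tu$ is continuous (even Hölder) up to $\overline D$ and bounded because $u$ is. Since $m\ge 2$, the axis $\R p\cap D$ has codimension $m\ge 2$, so a bounded solution extends across it: $Tu$ is a weak solution of $-\De v=f(v)/(2|z|)$ on all of $D$, its right-hand side is $C^{1,\al}$ ($f\in C^{1,\al}$, $|z|$ smooth and nonvanishing on $\overline D$), and Schauder theory yields $Tu\in C^{2,\al}(\overline D)$; the same argument (with codimension $m\ge2$ of $\{y_1=0\}\cup\{y_2=0\}$ in $A$) applies to $S$. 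Alternatively one can use the standard fact that $u(x)=w(|y_1|,|y_2|)$ is $C^{2,\al}(\overline A)$ iff $w$ is a $C^{2,\al}$ function of $(|y_1|^2,|y_2|^2)$ and that an axially symmetric $v$ is $C^{2,\al}(\overline D)$ iff it is a $C^{2,\al}$ function of $(|z|,z\cdot p)$, together with the fact that $(s^2,t^2)\mapsto\big((s^2+t^2)/2,(s^2-t^2)/2\big)=(|z|,z\cdot p)$ is an invertible linear map taking the one region onto the other. A short direct check then gives $S\circ T=\mathrm{id}_X$ and $T\circ S=\mathrm{id}_Y$.

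It remains to see that $T$ (and $S$) send solutions to solutions. With $s=\rho\cos\ta$, $t=\rho\sin\ta$, $\rho=|x|$, $\ta\in[0,\pi/2]$, the $\R^{2m}$-Laplacian of $u=w(s,t)$ becomes, after a routine computation,
\[
\De u = W_{\rho\rho}+\frac{2m-1}{\rho}\,W_\rho+\frac{1}{\rho^2}\Big(W_{\ta\ta}+2(m-1)\cot(2\ta)\,W_\ta\Big),\qquad W(\rho,\ta):=w(\rho\cos\ta,\rho\sin\ta).
\]
Substituting $r=\rho^2/2$, $\va=2\ta$, $W(\rho,\ta)=h(r,\va)$ and applying the chain rule, the right-hand side collapses to
\[
\De u = 2|z|\left(h_{rr}+\frac{m}{r}\,h_r+\frac{1}{r^2}\big(h_{\va\va}+(m-1)\cot\va\,h_\va\big)\right)=2|z|\,\De v,
\]
since the bracket is precisely the $\R^{m+1}$-Laplacian of the axially symmetric function $v(z)=h(|z|,\va)$. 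As $u$ and $v=Tu$ take the same values, $-\De u=f(u)$ in $A$ holds iff $-\De v=f(v)/(2|z|)$ in $D$. For the boundary conditions, $\pa A$ corresponds to $\pa D$ as above, the Dirichlet condition transfers trivially, and, because the radial direction in $x$ maps to the radial direction in $z$ with $W_\rho=\rho\,h_r$ and $\rho=a$ or $b$ nonzero, $\pa u/\pa\nu=0$ is equivalent to $\pa v/\pa\nu=0$.

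The genuinely delicate point is the $C^{2,\al}$-regularity across the degenerate loci (the only place where $m\ge 2$ is used); everything else is a computation. Combining the well-definedness of $T$ and $S$, the Laplacian identity, the matching of boundary conditions, and the relations $S\circ T=\mathrm{id}_X$, $T\circ S=\mathrm{id}_Y$ yields the asserted bijective correspondence.
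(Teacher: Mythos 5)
Your proposal is correct and follows essentially the same route as the paper: the change of variables $|z|=|x|^2/2$, $\varphi=2\theta$ and the identity $\Delta_{\R^{2m}}u=2|z|\,\Delta_{\R^{m+1}}v$ for functions of $(|y_1|,|y_2|)$, respectively axially symmetric functions, is exactly the paper's proof of Theorem 1.1. The only addition is your careful verification (via removable singularities over the codimension-$m\ge 2$ degenerate sets plus Schauder regularity) that the transformed functions are genuinely $C^{2,\alpha}$ across the symmetry axis and the sets $\{y_i=0\}$, a point the paper's computation leaves implicit; this is a welcome tightening rather than a different argument.
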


The map   which gives the bijection to prove Theorem 1.1 will be    defined in Section 3 after choosing  suitable
coordinates  in $\R^{2m} $ and $\R^{m+1}.$

\par The  possibility  of reducing a problem  in dimension $2m$ to a problem  in the  lower dimension $(m+1)$ is of
 great importance   in the study of semilinear elliptic equations. As example   one can   think of the case of power
nonlinearities  when  critical  or supercritical   problems in $\R^{2m}$  can  become subcritical in $\R^{m+1}.$ Moreover
solutions   concentrating  on sets  of   a certain  dimension in $\R^{m+1}$ (e.g. points) can give rise to  solutions
concentrating  on higher dimensional  manifolds on $\R^{2m}.$

Indeed, the inspiration for our method came from the paper  \cite{RS} where a reduction method was introduced to
pass from a singularly   perturbed problem  in an annnulus in $\R^4$ to a singularly perturbed problem in an annulus in
$\R^3$ which allowed  to prove the existence of  solutions  concentrating  on $S^1$-orbits in $\R^4.$ Their
construction is  related to Hopf  fibrations   and  an  extension  to other dimensions seems to be possible only in dimension
8  and 16 (see also \cite{CFP}).

Our reduction  works  in all even dimensions, and  in $\R^4$ allows  to get the  same result as in \cite{RS}.

The new  idea (compared to \cite{RS}) is  to impose more symmetry  on both problems  and the   key point is to
identify  symmetric solutions of (1.1) with axially symmetric  solutions of (1.2), once the   reduction has been made. In \cite{RS} the  reduced problem  in $\R^3$ did  not have any particular symmetry.

Of course, this means that   our method can be applied to \lq\lq lift" solutions from $D \subset  \R^{m+1}$ to     $A \subset
\R^{2 m} $ when we know that the solutions in $D$ are axially symmetric. However, by the results of \cite{P}  and
\cite{PW}  (see also \cite{GPW}) this  is true   for every solution of (1.2) with  Morse index less than on equal to $(m+1)$ if $ f$ or
$f^\prime_{v}$ are  convex. Actually in these papers only the Dirichlet problem is  considered, but it is easy to see,
arguing as in \cite{M}, that the symmetry results extend also to the  Neumann problem.      In particular the solutions in the
annulus in $\R^3$ considered in \cite{RS} which  are  least energy positive solutions are axially symmetric. This is why our result applies to the case of  \cite{RS}.
As application of our reduction method we focus as in \cite{RS}, on the following  Dirichlet  problem which was  , indeed, the
initial motivation  for our  result:
\be
\left\{ \begin{array}{llllll}
-\Delta u  +\la u = |u|^{p-1} u  &\mbox{ in A}  \\
u =0 &\mbox{  on }  \pa A \end{array}\right.  \ee

where $\la >0$ and  $p>1.$

Note that (1.3) is equivalent  to the singularly perturbed problem

\be \left\{ \begin{array}{rlllll}
 -\varepsilon ^2 \De u + u = |u|^{p-1} u  &\mbox{  in } & A \\
 u &=& 0 &\mbox{ on }  \pa  A  \end{array}\right.  \ee
and to study (1.3)  as  $\la \rightarrow \infty$ is equivalent to studying (1.4) as $\varepsilon \rightarrow  0.$
By applying  Theorem 1.1. we  reduce problem (1.3) to (1.2) with
\be
f (v) = |v|^{p-1}  v -\lambda v \ee

If we  take $p < \frac{(m+1)+2}{(m+1)-2},$ i.e  $p$ subcritical  in dimension $(m+1)$,   then the  inhomogeneous  problem  (1.2),  with  $f$ given by  (1.5), can be studied  as in \cite{RS}
adapting the   methods of \cite{NW}, \cite{DF} to produce least energy single peak positive solutions $v_\la$ which
concentrated on the  inner boundary of $D$ as $ \la \rightarrow  \infty.$ Also,  by adapting the method of
\cite{Nou-W} we show the existence  of least energy two peaks nodal solutions  $\tilde{v}_\lambda$ of (1.2) which
again concentrate on the inner boundary as $\la  \rightarrow  +\infty.$

Since these solutions have Morse index 1 and 2 respectively, by the results of \cite{P} and \cite{PW}  (see also \cite{BWW}) we
deduce that  they are foliated Schwarz symmetric, so, in particular , they are axially  symmetric.

Then  we  can   transform  these solutions getting families of  positive solutions of (1.3) concentrating on a
 $(m-1)$  dimensional - sphere and families of   sign changing  solutions  of (1.3)    concentrating on two $(m-1)$-  dimensional spheres. So,
denoting by $(\pa A)_a =\{x  \in \pa A, |x| = a \}, $ we have

\begin{theorem}
 Let $1 <p < \frac{(m+1) +2}{(m+1) -2} $. Then there exists a family $ \ \{u_\la\}$ of
positive solutions of (1.3) concentrating on  a $(m-1) -$ dimensional sphere $\Ga \subset (\pa A)_a$ and a family $
\{\tilde{u}_\lambda\}$ of sign changing solutions of  (1.3) such that the positive   part   $ \{\tilde{u}^+_\la\}$   and  the negative part $\{\tilde{u}_{\la}^-\} $  concentrate on
$(m-1)$ dimensional spheres $\Ga^+ \subset (\pa A)_a$ and   $\Ga^-  \subset (\pa A)_a,$
respectively,  as $\la  \rightarrow   +\infty. $
\end{theorem}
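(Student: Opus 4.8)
The plan is to combine Theorem 1.1 with a careful analysis of the auxiliary problem (1.2) when $f$ is given by (1.5). First I would fix $p$ subcritical in dimension $m+1$ and study the singularly perturbed version of (1.2), namely the problem obtained from (1.3)--(1.4) after the reduction: $-\De v + \la v = |v|^{p-1}v$ on $D$ with the weight $\frac{1}{2|z|}$ in front of the right-hand side, or equivalently the $\var$-form. Since $D$ is a fixed annulus in $\R^{m+1}$ and the weight $\frac{1}{2|z|}$ is smooth and bounded above and below on $\overline D$, the standard Lyapunov--Schmidt / variational gluing machinery of \cite{NW}, \cite{DF} applies essentially verbatim: one builds a single-peak positive solution $v_\la$ by a constrained minimization (least energy) argument, and the location of the peak is governed by the interplay between the boundary of $D$ and the coefficient $\frac{1}{2|z|}$; as in \cite{RS} the energy is lowest when the peak sits on the inner boundary $|z| = a^2/2$, so $v_\la$ concentrates there as $\la \to +\infty$. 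For the nodal solution I would adapt \cite{Nou-W}: minimize the energy over the nodal Nehari-type set to produce a two-peak sign-changing solution $\tilde v_\la$ whose positive and negative bumps again migrate to the inner boundary.

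The next step is to upgrade these solutions to axially symmetric ones so that Theorem 1.1 can be applied. Here I would invoke the Morse index bound: the least energy positive solution $v_\la$ has Morse index $1$ and the least energy nodal solution $\tilde v_\la$ has Morse index $2$, both $\le m+1$. By the foliated Schwarz symmetry results of \cite{P}, \cite{PW} (and \cite{BWW}), valid for the Dirichlet problem on the annulus $D$ with $f$ or $f'_v$ convex — which holds for (1.5) when $p \ge 2$, and in general after the standard approximation argument — any solution with Morse index $\le m+1$ is foliated Schwarz symmetric, hence in particular axially symmetric, i.e. $v_\la, \tilde v_\la \in Y$ (after the obvious mollification to land in $C^{2,\al}(\overline D)$, which is automatic by elliptic regularity on the annulus).

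Finally I would push these solutions back up through the bijection of Theorem 1.1. Let $u_\la \in X$ correspond to $v_\la$ and $\tilde u_\la \in X$ correspond to $\tilde v_\la$; these solve (1.3) on $A \subset \R^{2m}$. It remains to translate "concentration on the inner boundary of $D$" into "concentration on an $(m-1)$-sphere in $(\pa A)_a$". This is where I would be most careful: I need the explicit form of the reduction map from Section 3, which relates the coordinates on $A$ (the variables $|y_1|, |y_2|$) to those on $D$ (the variables $|z|, \va$). A peak of $v_\la$ at a point $z_0$ with $|z_0| = a^2/2$ corresponds, under the map, to a concentration set in $A$; since the symmetry forces $u_\la(x) = w(|y_1|,|y_2|)$ and $|z| = a^2/2$ pulls back to $|x| = a$ (because $|z|$ is essentially $\tfrac12|x|^2$ under the construction), the concentration set is the orbit of a single point of $(\pa A)_a$ under the symmetry group, which is precisely an $(m-1)$-dimensional sphere $\Ga \subset (\pa A)_a$. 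For the nodal case, $\tilde v_\la^+$ and $\tilde v_\la^-$ concentrate at possibly distinct points $z^+, z^-$ on the inner sphere of $D$, which lift to distinct $(m-1)$-spheres $\Ga^+, \Ga^- \subset (\pa A)_a$, giving the claim. The main obstacle I anticipate is bookkeeping the precise quantitative relation between the concentration scales and sets on the two sides of the reduction map — verifying that the weight $\frac{1}{2|z|}$ does not spoil the concentration analysis of \cite{NW}, \cite{DF}, \cite{Nou-W}, and that the peak is genuinely pinned to the inner boundary rather than to an interior critical point of the coefficient.
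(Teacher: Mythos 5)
Your overall route is the same as the paper's: produce the least energy positive and nodal solutions of the weighted problem on $D\subset\R^{m+1}$, show they concentrate at points of the inner boundary as $\la\to\infty$ (following \cite{NW}, \cite{DF}, \cite{Nou-W}, \cite{RS}), use the Morse index bounds $1$ and $2$ together with the symmetry theorems to conclude foliated Schwarz (hence axial) symmetry so that the solutions lie in $Y$, and then lift through the bijection of Theorem 1.1. However, your last step contains a genuine gap exactly at the point on which the dimension count depends. You assert that ``the concentration set is the orbit of a single point of $(\pa A)_a$ under the symmetry group, which is precisely an $(m-1)$-dimensional sphere.'' This is false for a generic point: the $O(m)\times O(m)$ orbit of $x=(y_1,y_2)$ with $y_1\neq 0$ and $y_2\neq 0$ is $S^{m-1}\times S^{m-1}$, of dimension $2m-2$; equivalently, the preimage under the reduction map of a generic point $(\rho,\va)$ of $D$ with $\va\in(0,\pi)$ is a product of two spheres, not one sphere. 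The preimage degenerates to a single $S^{m-1}$ (i.e. $y_1=0$ or $y_2=0$, hence $\theta=0$ or $\pi/2$) only when the point of $D$ lies on the symmetry axis, $\va=0$ or $\va=\pi$. So to obtain $(m-1)$-dimensional concentration you must additionally prove that the peaks $P_\la$, $P^\pm_\la$ (and their limits $P$, $P^\pm$) lie on the symmetry axis of $v_\la$, $\tilde v_\la$. This is what the paper does: foliated Schwarz symmetry forces all critical points onto the axis (Theorem 4.2(ii) and the discussion preceding Theorem 4.3), and Remark 3.1 identifies axis points of $D$ with single $S^{m-1}$ orbits in $A$. You have all the ingredients (you proved foliated Schwarz symmetry), but as written the conclusion that $\Ga$, $\Ga^+$, $\Ga^-$ are $(m-1)$-spheres does not follow; without the axis location the lifted concentration set could a priori be $(2m-2)$-dimensional.

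A secondary, smaller point: your convexity argument for the symmetry step is not quite right for $1<p<2$, since then neither $f(s)=\frac{1}{2|z|}(|s|^{p-1}s-\la s)$ nor $f'$ is convex on all of $\R$, and an ``approximation argument'' is not a substitute. For the positive solution, convexity of $f$ on $(0,\infty)$ (valid for every $p>1$) together with Theorem 2.1 suffices; for the least energy nodal solution one should invoke the result of \cite{BWW} (Theorem 2.2), which requires only subcriticality and the monotonicity of $s\mapsto f(|z|,s)/|s|$, exactly as the paper does in Proposition 4.1(iv).
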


Let us remark that the exponent $p \in\left( 1,  \frac{ (m+1) +2}{ (m+1) -2} \right)$ can be critical or supercritical for  problem
(1.3)  in $\R^{2m.}$

\begin{remark}

 As it  is clear from the construction these solutions   have an $O (m) \times O (m)$ symmetry.
\end{remark}
As far as  we know the result  of Theorem 1.2  is  the first result   for  singularly perturbed Dirichlet problems about  sign changing  solutions concentrating on  manifolds   of dimension larger    than or equal to 1. It is also a new result for positive solutions
since the only previous    ones  concern  concentration on  $(2m - 1)$ dimensional spheres (\cite{AMN}, \cite{EMSS}) or on 1-dimensional spheres  in $\R^4 (\cite{RS}). $

\par Another interesting question connected with the concentration phenomena is the  asymptotic behaviour of the Morse
index of the solutions when $\la \rightarrow  +\infty.$
For  the least energy positive or sign changing solutions concentrating in one or two points the Morse index is obviously
independent of $\lambda$ and is $1$ or $2$.  When the concentration takes place on a $(2m -1)$-dimensions sphere, which is the case of radial solutions,    then it is easy to see that the  Morse index tend to  infinity, as $\lambda \rightarrow +\infty.$ Indeed in this  case the
spectrum of the  linearized  operator can be split into a \lq\lq radial" part and an \lq\lq angular" part.  For  solutions concentrating  on lower dimensional  spheres, as
in our case, a decomposition of the spectrum does not seem immediate.
However we are able  to show:

\begin{theorem}
  The Morse indices  $m (u_\lambda) $ and $ m  (\tilde{u}_\la)$ of the solutions constructed  in
Theorems  1.2 tend to infinity as $\la \rightarrow +\infty.$
\end{theorem}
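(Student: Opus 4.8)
The plan is to exploit the bijective correspondence of Theorem 1.1 together with a lower bound on the Morse index of the reduced solutions $v_\la$ and $\tilde v_\la$ in $D\subset\R^{m+1}$. The key observation is that, although the correspondence $X\leftrightarrow Y$ is between \emph{symmetric} function spaces, the relation between the quadratic forms is explicit: if $u$ corresponds to $v$, then the linearized operator $-\Delta-f'(u)$ on $A$ transforms into $-\Delta-\frac{f'(v)}{2|z|}$ on $D$, up to the Jacobian weight coming from the change of variables used in Section 3. Hence a test function $\phi$ on $D$ producing negativity of the reduced quadratic form pulls back to a test function on $A$ (radially symmetric in $y_1,y_2$) producing negativity of the quadratic form for (1.3); moreover \emph{linearly independent} negative directions in $D$ pull back to linearly independent negative directions in $A$, since the pullback map is injective. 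So it suffices to show $m(v_\la)\to\infty$ and $m(\tilde v_\la)\to\infty$ within the class of axially symmetric perturbations — or, even simpler, to produce many negative directions directly for the $\R^{2m}$-problem by a localized-eigenfunction argument.

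First I would recall the standard fact that for a single spike solution of (1.4) concentrating at a point $z_0$ in a domain of $\R^{m+1}$, rescaling by $\var$ around $z_0$ the linearized operator converges to $-\Delta - p\,U^{p-1}+1$ on $\R^{m+1}$ (where $U$ is the ground state), which has Morse index exactly $1$; this is why $m(v_\la)$ is \emph{bounded} in the full space. The point of Theorem 1.3 is that after lifting to $A\subset\R^{2m}$, the solution $u_\la$ is no longer a point-concentration but concentrates on the $(m-1)$-sphere $\Gamma\subset(\partial A)_a$, and along $\Gamma$ there is a whole \emph{tangential} family of almost-translated copies of the spike which are almost orthogonal and almost solve the linearized equation. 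Concretely: fix a large number $N$, choose $N$ points $q_1,\dots,q_N$ on $\Gamma$ at mutual distance $\gg\var$ but all $\to\Gamma$, and set $\psi_i(x)=\chi(x)\,\partial_\tau u_\la(x)$ localized near $q_i$, where $\partial_\tau$ is a unit tangential derivative along $\Gamma$ and $\chi$ a cutoff supported in a ball of radius $\delta$ with $\var\ll\delta\ll\mathrm{dist}(q_i,q_j)$. These have disjoint supports, hence are orthogonal; and because $u_\la$ is, near each $q_i$, exponentially close to a translate of the rescaled ground-state profile, one computes $Q_\la(\psi_i,\psi_i)<0$ for $\la$ large, exactly as in the standard Lyapunov–Schmidt estimates (the tangential derivative of a near-spike is, after rescaling, close to $\partial U$, on which the linearized operator acts with eigenvalue $0$, and the error from the cutoff and the curvature of $\Gamma$ is lower order). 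This produces $N$ mutually orthogonal negative directions, so $m(u_\la)\ge N$ for $\la$ large; letting $N\to\infty$ gives $m(u_\la)\to\infty$. The nodal case $\tilde u_\la$ is identical, working near $\Gamma^+$ (or $\Gamma^-$).

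The main obstacle — and the step deserving the most care — is the quantitative estimate that $Q_\la(\psi_i,\psi_i)<0$ uniformly, i.e. controlling \emph{all} the error terms: (i) the discrepancy between $u_\la$ and the model spike (known exponentially small from the construction via \cite{NW}, \cite{DF}, \cite{Nou-W}); (ii) the commutator between $\partial_\tau$ and the (variable-coefficient, weighted) operator $-\Delta-\frac{f'(v)}{2|z|}$, equivalently the effect of the curvature of $\Gamma$ and of the conformal-type weight $\tfrac1{2|z|}$ inherited from the reduction — these contribute terms of order $\delta$ or $\var/\delta$ which must be beaten by the (small but strictly negative) leading term; and (iii) the cutoff error, of order $e^{-c\delta/\var}$. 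One must therefore choose the scales in the right order, $\var \ll e^{-1/\sqrt{\var}} \cdot(\text{something}) \ll \delta \ll d(q_i,q_j)\ll \mathrm{diam}\,\Gamma$, and track constants depending only on $N$. An alternative, cleaner route is to invoke directly the results of \cite{P}, \cite{PW} on the behavior of the linearized operator under the foliated-Schwarz symmetry together with an explicit separation-of-variables computation: writing the linearized eigenvalue problem on $A$ in the coordinates $(|y_1|,|y_2|)$ and expanding in spherical harmonics on each $S^{m-1}$-factor (which is legitimate since $u_\la\in X$ is $O(m)\times O(m)$-invariant by Remark 1.1), the negative/zero eigenvalue of the "profile" part of the operator gets shifted by the nonnegative eigenvalues of the Laplace–Beltrami operator on $S^{m-1}\times S^{m-1}$, and one shows that arbitrarily many of these shifted eigenvalues stay negative as $\la\to\infty$ because the spike becomes singular and the relevant eigenvalue of the profile operator tends to $0^-$ fast enough. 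Either way, the conclusion $m(u_\la),\,m(\tilde u_\la)\to+\infty$ follows.
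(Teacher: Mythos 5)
Your main construction has a concrete flaw coming from the symmetry of $u_\la$. The solution $u_\la$ lies in $X$, i.e.\ it is $O(m)\times O(m)$--invariant, and the concentration sphere $\Ga\subset(\pa A)_a$ is exactly an orbit of this group action (it is the $S^{m-1}$ sitting in the $y_1$--factor, with $y_2=0$). Hence the tangential directions along $\Ga$ are generated by rotations in $y_1$, and the derivative of $u_\la$ along these directions vanishes identically: if $u_\la(x)=w(|y_1|,|y_2|)$ and $\eta\perp y_1$, then $\partial_{(\eta,0)}u_\la=w_{\rho_1}\,\frac{y_1\cdot\eta}{|y_1|}=0$. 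So your test functions $\psi_i=\chi\,\partial_\tau u_\la$ are identically zero (or, if $\partial_\tau$ is a fixed Euclidean vector tangent to $\Ga$ at $q_i$, they are of size $O(\de)$ near $q_i$ and are not close to a translation mode $\partial U$ of the rescaled profile). Moreover, even for a genuine approximate kernel mode, the statement ``the limit eigenvalue is $0$ and the cutoff/curvature errors are lower order'' can never yield the strict inequality $Q_\la(\psi_i,\psi_i)<0$: you would need to identify the \emph{sign} of the leading correction, and that is precisely the step you leave open. So the route through localized tangential derivatives, as written, does not produce any negative direction.

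Your alternative route (separation of variables in $(|y_1|,|y_2|)$ with spherical harmonics on the $S^{m-1}$--factors) is essentially the paper's argument, but the mechanism you invoke is backwards. The angular shift produced by a harmonic $\psi_k$ is of size $\nu_k/r^2\le \nu_k/a^2$, a positive constant \emph{independent of $\la$} (the annulus keeps $r\ge a>0$); if the relevant eigenvalue of the ``profile'' operator tended to $0^-$, adding this shift would make the mode positive and the argument would collapse. What actually drives the result is that the first eigenvalue $\mu_1(\la)$ of the linearized operator tends to $-\infty$: the paper proves this (Lemma 5.1) simply by testing $Q_{u_\la}$ with $u_\la$ itself, getting $\mu_1\le(1-p)\la$, and then tests with $\Phi^k=g_1(r,\ta)\bigl[\cos^2\ta\,\psi_k(\sigma_1)+\sin^2\ta\,\psi_k(\sigma_2)\bigr]$, where $g_1$ is the first eigenfunction (which lies in $X$ by Lemma 5.2). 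All angular and cross terms are then bounded by a constant $C_k$ independent of $\la$, so $Q_{u_\la}(\Phi^k)\le\mu_1\de+C_k<0$ for $\la$ large, and the mutual $L^2$--orthogonality of the $\Phi^k$ gives $m(u_\la)\to\infty$; the same with $\tilde g_1$, $\tilde u^\pm_\la$ handles $\tilde u_\la$. To repair your proposal you should replace the ``eigenvalue tends to $0^-$'' claim by the divergence $\mu_1(\la)\to-\infty$ and carry out the (elementary but necessary) bookkeeping of the cross terms; no spike-profile expansion or Lyapunov--Schmidt machinery is needed.
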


To prove  Theorem 1.3. we test the quadratic form associated to the  linearized operator at $u_\la$  and
$\tilde{u}_\la$ by some functions  obtained   using eigenfunctions  of the  Laplace -Beltrami operator on $S^{m-1}.$ Moreover we exploit that the first
eigenvalue of the linearised operator at these solutions   tends to $-\infty$ as $\la \rightarrow  \infty.$
\par  The  result of Theorem 1.3 shows that the
concentrating solutions we find become more and more unstable as $\la \rightarrow   \infty. $ This indicates that many
local bifurcations  should  occur. However,  since we do not know if that the families $\{u_\la \} $ or $
\{\tilde{u}_\la \}$ give a   curve  in $X,$  we cannot prove it rigorously. Note that if we knew  that the least energy
solutions (positive  or   nodal) in $D$  were unique (up to symmetry) then this  would be  true.

\par Finally  we believe that this reduction method can be used to get different kind of results for other  semilinear elliptic problems (as for example  in \cite{CFP}). Moreover it should be possible to  generalize   this approach      to reduce problems in $\R^k$ to
problem in $\R^h,$ for suitable   $h <k$ exploiting other  symmetries.

\par The paper is  organized   as follows. In Section  2 we recall some  results   on symmetry of solutions of general semilinear   elliptic
equations . In Section 3  we introduce suitable coordinates and symmetries and prove Theorem 1.1.   In Section 4 we show the concentration   of the least energy solutions  of problem  (1.2).  Finally in Section 5 we prove Theorem 1.2 and Theorem 1.3.

\section{ Axial Symmetry  of solutions of semilinear elliptic  equations. }
\setcounter{equation}{0} Let  us consider a general semilinear   elliptic problem   of the type:
 \be \left\{\begin{array}{lllll}
-\De v = f (|z|,  v)  &\mbox{  in B}  \\
v =0   \mbox{ or }  \displaystyle{\frac{\pa v}{\pa  \nu}} =0  &\mbox{ on } \partial  B   \end{array} \right. \ee
where $B$ is either an annulus or a ball centered at the  origin of $\R^N, N \geq  2, z \in \R^N, $ and  $f:
\overline{B} \times \R \rightarrow \R$ is (locally ) a $C^{1,\alpha}$-function  .
We give the following  definitions:
\begin{definition}
 We say that a function $ v \in C (\overline{B})$ is  axially  symmetric if there is a unit vector
$p \in \R^N, |p| =1$ such that  $v (x) $ only depends on $\rho =|z| $ and      $\varphi  =\arccos \left(\frac{z}{|z|}. p\right).$
\end{definition}

\begin{definition}
If an axially symmetric  function is also non increasing in the polar angle     then it is called  foliated  Schwarz symmetric.
\end{definition}

\begin{remark}
Let us write  $z \in \R^N $ as  $z= (z_1, \ldots z_n)$  and consider the  spherical coordinates
$(\rho, \varphi_1, \varphi_2, \ldots \varphi_{N-1}),  \varphi_i \in [0, \pi], i = 1, \ldots N-2, \ \varphi _{N-1} \in
[0, 2\pi], \rho = |z|,$ then
\be \left\{\begin{array}{llll}
z_1  = \rho \sin \varphi_1 \ldots \sin \varphi_{N-1} \\
 z_2 = \rho \sin \varphi_1 \ldots  \sin \varphi_{N-2} \cos \varphi_{N-1} \\
 \ldots \ldots \ldots \\
z_{N-1} =\rho \sin  \varphi_1 \cos \varphi_2 \\
z_N =\rho \cos \varphi_1  \end{array} \right. \ee

Then if $v$  is a  axially symmetric function, without loss of generality, we  can think  that the  vector $p$  is
$ p= (0, 0, \ldots 1)$ i.e.  the symmetry axis is the $z_n$-axis  and therefore  $v$ depends  only on $\rho $ and
$\varphi_1.$
\end{remark}

\begin{remark}

 If $v$ is an axially symmetric function   belonging   to $C^{2,\al}   (\overline{B})$ then the  Laplace operator, using
the above coordinates and the fact that $v =v (\rho, \varphi_1),$ reduces to

\be
\Delta_{R^N} v = v _{\rho \rho} + \frac{N-1}{\rho} v_\rho +
\frac{N-2}{\rho^2} \frac{\cos \varphi_1}{\sin \varphi_1} v _{\varphi_1}  +
\frac{1}{\rho^2} v_{\varphi_1 \varphi_1}  \ee
\end{remark}
Some  sufficient  conditions on the nonlinearity   and on a solution $v$ of  (2.1) for the  foliated Schwarz symmetry  have
been obtained in \cite{P}, \cite{PW} (see also \cite{GPW})  for  the
Dirichlet problem and they extend easily to the Neumann problem (see
\cite{M}).

We recall them here:

\begin{theorem}
Let $f (|z|,s)$ be either convex in the $s$-variable or with a  convex first derivative  $f^\prime (|z| , s)  =
\frac{ \pa    f}{\pa s} (|z|, s),  $ for  every  $z \in B$. Then any classical solution of (2.1) with Morse index $j \leq N $ is
foliated  Schwarz symmetric.
\end{theorem}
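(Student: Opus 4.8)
The plan is to argue as in \cite{PW} (which refines \cite{P}), proving the statement first for the Dirichlet problem and then observing, exactly as in \cite{M}, that nothing changes for the Neumann problem. First I would set up the reflection machinery: for a unit vector $e\in\R^N$ let $T_e=\{z:z\cdot e=0\}$ be the hyperplane through the origin with normal $e$, let $\sigma_e$ be the reflection across $T_e$, let $B_e=\{z\in B:z\cdot e>0\}$ be the corresponding half of $B$ (here one uses that $B$ is a ball or annulus centered at $0$, so $\sigma_e(B)=B$), and put
$$
w_e:=v-v\circ\sigma_e,\qquad c_e(z):=\int_0^1 f^\prime\bigl(|z|,\ t\,v(z)+(1-t)\,v(\sigma_e z)\bigr)\,dt .
$$
Since $|\sigma_e z|=|z|$, the mean value theorem gives $-\De w_e=c_e(z)\,w_e$ in $B$ with $c_e\circ\sigma_e=c_e$, while $w_e$ is antisymmetric, $w_e\circ\sigma_e=-w_e$, and satisfies the homogeneous boundary condition inherited from $v$.

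Next I would use the convexity hypothesis to compare $c_e$ with the genuine linearized potential $f^\prime(|z|,v)$ that enters the quadratic form
$$
Q(\phi)=\int_B\bigl(|\na\phi|^2-f^\prime(|z|,v)\,\phi^2\bigr),
$$
whose Morse index is $m(v)\le N$. When $f(|z|,\cdot)$ is convex, $f^\prime(|z|,\cdot)$ is nondecreasing, so on the set where $v(z)\ge v(\sigma_e z)$ one has $t\,v(z)+(1-t)v(\sigma_e z)\le v(z)$ and hence $c_e(z)\le f^\prime(|z|,v(z))$; when $f^\prime(|z|,\cdot)$ is convex one uses instead the elementary inequality of \cite{PW}. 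In either case any function supported in $\{z\in B_e:w_e(z)>0\}$ (or in its $\sigma_e$-reflection) is controlled by $f^\prime(|z|,v)$, which is what lets information about the sign of $w_e$ be converted into an estimate on $Q$.

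The heart of the matter --- and the step I expect to be the \emph{main obstacle} --- is the dichotomy of \cite{PW}: either there is a direction $e^\ast\in S^{N-1}$ for which $w_{e^\ast}\equiv0$ or $w_{e^\ast}$ keeps one sign throughout $B_{e^\ast}$; or else $w_e$ changes sign in $B_e$ for every $e\in S^{N-1}$, and then one can exhibit an $(N+1)$-dimensional subspace of $H^1_0(B)$ (resp.\ $H^1(B)$) on which $Q$ is negative definite, contradicting $m(v)\le N$. To build that subspace one splits, for each $e$, the restriction $w_e|_{B_e}$ into positive and negative parts, extends each antisymmetrically across $T_e$, and checks that the resulting functions are subsolutions of $-\De-c_e$ (the free boundary contributing a favourable sign); combined with $c_e\le f^\prime(|z|,v)$ on their supports this yields $Q(\cdot)\le0$, and the convexity together with the strong maximum principle and unique continuation upgrades this to strict negativity. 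A topological argument over the sphere of directions --- of Borsuk--Ulam type, exploiting the antipodal relation $w_{-e}=w_e$ with the two halves interchanged --- then extracts $N+1$ of these functions that are linearly independent and mutually $Q$-orthogonal. This dimension count, rather than the pointwise estimates, is where the real work lies.

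Granting the dichotomy, the first alternative must hold, so there is $e^\ast$ with, say, $w_{e^\ast}\ge0$ in $B_{e^\ast}$. A rotating-plane argument, again as in \cite{P} and \cite{PW}, then shows that $\{e\in S^{N-1}:w_e\ge0\text{ in }B_e\}$ is a closed geodesic half-sphere of $S^{N-1}$ (or all of $S^{N-1}$, the radial case); if $p$ is its pole then $w_e\equiv0$ for all $e\perp p$, so $v$ is axially symmetric about $p$, and the condition $w_e\ge0$ for $e\cdot p\ge0$ says precisely that $v$ is non-increasing in the polar angle $\varphi=\arccos(z\cdot p/|z|)$, i.e.\ foliated Schwarz symmetric. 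The Neumann case needs no change, since the linear equation for $w_e$, the maximum principle, the variational characterization of the Morse index, and the rotating-plane comparison are all valid for the Neumann Laplacian on the smooth radial domain $B$, as observed in \cite{M}.
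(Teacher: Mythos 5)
First, a point of comparison: the paper does not prove this statement at all --- Theorem 2.1 is \emph{recalled} from \cite{P} and \cite{PW} (see also \cite{GPW}), with the remark that the Dirichlet arguments extend to the Neumann case as in \cite{M}. So your proposal is really being measured against those references, and while it reproduces their outer architecture correctly (the reflected difference $w_e=v-v\circ\sigma_e$, the linear equation $-\De w_e=c_e w_e$ with $c_e\circ\sigma_e=c_e$, the convexity comparison between $c_e$ and the linearized potential, the final rotating-plane argument identifying the axis $p$ and the angular monotonicity, and the observation that the Neumann case goes through as in \cite{M}), it has a genuine gap exactly at the only nontrivial point: how the hypothesis $m(v)\le N$ is used.

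Your proposed dichotomy --- ``if $w_e$ changes sign in $B_e$ for every $e\in S^{N-1}$, then there is an $(N+1)$-dimensional subspace of $H^1_0(B)$ on which $Q$ is negative definite'' --- is asserted, not proved, and the sketch you give does not support it. From a single direction $e$ the convexity comparison yields (via the odd extensions of $(w_e)^\pm|_{B_e}$) a pair of functions with disjoint supports on which $Q\le 0$; but functions arising from \emph{different} directions have overlapping supports, and nothing in your argument shows that any $N+1$ of them are linearly independent, mutually $Q$-orthogonal, or that $Q$ is strictly negative on their span (nonpositivity of $Q$ on each member of a family does not pass to negative definiteness on the span). The invoked ``Borsuk--Ulam type'' extraction is also suspect as described: since $\sigma_{-e}=\sigma_e$ one has $w_{-e}=w_e$, so the family you are varying is \emph{even} in $e$, and no odd map to which an antipodality theorem could apply is exhibited. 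The cited proofs use the index bound differently: one shows there exists a single direction $e$ for which $Q$ is nonnegative on the subspace of functions antisymmetric with respect to $\sigma_e$ (equivalently, a first eigenvalue in the half-domain, for the $\sigma_e$-symmetrized potential, is nonnegative); only then do convexity and the maximum principle force $w_e$ to have a sign in $B_e$, after which the rotating-plane step you describe applies. As written, your text is therefore an outline whose decisive step is deferred --- by your own admission --- to the very results of \cite{P}, \cite{PW}, \cite{GPW} that the paper quotes, so it cannot be counted as an independent proof of the theorem.
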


We recall that the Morse  index of a solution $v$ of (2.1) is the  number of the negative  eigenvalues of the
linearized operator at the solution $L_v =-\De - f^\prime (|z|, v)$ with  the same  boundary condition on $\pa  B.$

\par For least  energy  nodal solutions of (2.1) it is also useful to recall a similar result obtained  in \cite{BWW}.

\begin{theorem}
If  $f (|z|, s)$ is subcritical i.e $\exists  p \in \left(2, \frac{2N}{N-2}\right),$  if $N \geq 3, p \in  (2, \infty)$, if $N \leq 2 $ and $C >0$ such that
$$f  (|z|, s ) \leq  C (|s| + |s|^{p-1}) $$
and   if the function $s \rightarrow  \frac{f (|z|, s)}{ |s|}  $ is  strictly increasing     on $\R^{-}$ and $ \R^+  \ \
 \mbox{ for all }  z \varepsilon B, $ then the least  energy nodal solution of (2.1) is   foliated Schwarz symmetric.
\end{theorem}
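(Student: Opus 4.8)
The plan is to realise the least energy nodal solution variationally and then combine polarization (two‑point rearrangement) with the strong maximum principle. Put $F(|z|,s)=\int_0^s f(|z|,\si)\,d\si$ and $J(v)=\frac12\int_B|\na v|^2-\int_B F(|z|,v)$ on $H^1_0(B)$ or $H^1(B)$ according to the boundary condition. Under the hypotheses of the theorem — subcriticality of $f$, which supplies the compactness for the direct method and the regularity that turns weak solutions into classical ones, and strict monotonicity of $s\mapsto f(|z|,s)/|s|$ on $\R^-$ and $\R^+$, which both yields $f(|z|,s)s>2F(|z|,s)$ for $s\neq0$ and makes the nodal Nehari reduction work — a least energy nodal solution $u$ is a minimizer of $J$ on the nodal Nehari set
$$\mathcal N=\{v:\ v^+\not\equiv0\not\equiv v^-,\ J'(v)v^+=0=J'(v)v^-\},$$
and, being a constrained minimizer on $\mathcal N$, it is a genuine critical point of $J$, hence a classical solution of (2.1). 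I fix such a $u$ and set $c_\ast=\inf_{\mathcal N}J$.

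Next I would bring in polarization. For a closed half‑space $\Om\subset\R^N$ with $0\in\pa\Om$ let $\si_\Om$ be the reflection across $\pa\Om$ and $v_\Om$ the polarization of $v$ with respect to $\Om$ (equal to $\max\{v(x),v(\si_\Om x)\}$ on $\Om$ and to $\min\{v(x),v(\si_\Om x)\}$ off $\Om$). Three elementary facts are needed: (i) $\int_B|\na u_\Om|^2=\int_B|\na u|^2$; (ii) since $B$ and the weight $|z|$ are $\si_\Om$‑invariant and $\Phi(r,a)+\Phi(r,b)=\Phi(r,\max\{a,b\})+\Phi(r,\min\{a,b\})$ identically, $\int_B\Phi(|z|,u_\Om)=\int_B\Phi(|z|,u)$ for every continuous $\Phi$, so the potential term of $J$ and both Nehari integrals are unchanged; (iii) $(u_\Om)^+$ and $(u_\Om)^-$ are, up to the reflection $\si_\Om$, the polarizations of $u^+$ and $u^-$, hence still nontrivial. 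Together these give $u_\Om\in\mathcal N$ and $J(u_\Om)=J(u)=c_\ast$, so $u_\Om$ is again a least energy nodal solution and therefore also a classical solution of (2.1).

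The decisive step is a maximum‑principle comparison. On the connected open set $B\cap\mathrm{int}\,\Om$ both $u$ and $u_\Om$ solve $-\De\cdot=f(|z|,\cdot)$, so $w:=u_\Om-u\geq0$ satisfies a linear equation $-\De w=c(x)w$ with $c\in L^\infty$ (solutions are bounded and $f$ is locally $C^{1,\al}$). By the strong maximum principle either $w\equiv0$ there — and one checks directly that then $u_\Om=u$ on all of $B$ — or $w>0$ throughout $B\cap\mathrm{int}\,\Om$. Running the same argument with $\widetilde w:=u_\Om-u\circ\si_\Om\geq0$ (note $u\circ\si_\Om$ also solves (2.1), by $\si_\Om$‑invariance of the equation) gives either $u_\Om=u\circ\si_\Om$ on $B$ or $\widetilde w>0$ throughout $B\cap\mathrm{int}\,\Om$. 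But $w>0$ and $\widetilde w>0$ at a common point would force $u_\Om(x)>u(x)$ and $u_\Om(x)>u(\si_\Om x)$, impossible since on $\Om$ one has $u_\Om(x)=\max\{u(x),u(\si_\Om x)\}$. Hence, for every closed half‑space $\Om$ through the origin, $u_\Om=u$ or $u_\Om=u\circ\si_\Om$.

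Finally I would invoke the characterization used in \cite{BWW} (see also \cite{PW}, \cite{GPW}): a continuous function on the radial domain $B$ that satisfies $u_\Om\in\{u,u\circ\si_\Om\}$ for every half‑space $\Om$ through the origin is foliated Schwarz symmetric, a consistent symmetry axis $p\in S^{N-1}$ being selected from the admissible reflection directions by a connectedness/continuity argument (equivalently, by maximizing $e\mapsto\int_B u(z)\,(z\cdot e)\,dz$ over $|e|=1$). This proves the theorem. I expect the main obstacle to be the bookkeeping in the sign‑changing case — checking that polarization keeps $u_\Om$ inside the nodal Nehari set with exactly the same energy, so that it remains a solution — together with the care required to pass from coincidence on $B\cap\mathrm{int}\,\Om$ to coincidence on all of $B$ and to justify the strong maximum principle (regularity of $u_\Om$, boundedness of $c$, connectedness of $B\cap\mathrm{int}\,\Om$).
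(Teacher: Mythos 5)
The paper offers no proof of this statement: it is recalled verbatim from \cite{BWW}, and your argument is essentially the proof given there --- polarization with respect to half-spaces through the origin preserves the energy and the nodal Nehari constraints (since $(u_H)^{\pm}$ are polarizations of $u^{\pm}$ and the domain and the radial dependence of $f$ are reflection-invariant), minimality forces the polarized function to be again a solution, the strong maximum principle comparison on $B\cap\mathrm{int}\,H$ yields $u_H=u$ or $u_H=u\circ\sigma_H$ for every such half-space, and the standard characterization then gives foliated Schwarz symmetry. Your write-up is correct modulo the two standard ingredients you explicitly invoke, namely that minimizers on the nodal Nehari set are critical points (Castro--Cossio--Neuberger, \cite{BWW}) and the two-alternative polarization characterization of foliated Schwarz symmetry with the selection of a common axis $p$, both of which are established in the cited literature.
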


\section{Reduction and proof of Theorem 1.1.}
\setcounter{equation}{0}
 Let us consider  $\R^{2m},  m \geq 2, $ as the  product of two
copies  of $\R^m$, i.e  $ \R^{2m}= \R^m \times \R^m$ and denote a point $x \in \R^{2m} $ by $x = (y_1, y_2), y_i \in
\R^m,   i =1,2.$

\par   Taking in  each $\R^m $ the  spherical coordinates
$$(\rho_1, \theta^1_1, \ldots, \theta^1_{m-1}), (\rho_2, \theta^2_1, \ldots, \theta^2_{m-1})$$
$$ \rho_1 = |y_1|, \quad \rho_2 = |y_2|, \quad \theta^i_1 \in [0, 2 \pi],  $$
$$\theta^i_j \in [0, \pi]   \mbox{ for  }  i =1, 2,  \quad  j=2, \ldots, m-1. $$
(see  Remark 2.1) and  observing  that
$$
\rho_1 =r \cos  \theta, \quad \rho_2 =  r   \sin \theta,  \  r  = |x|,  \  \theta  \in
[0, \frac{\pi}{2} ] $$
we have   that  a point $x  \in \R^{2m}$ can be  represented by the coordinates
\be
 x=  (r, \ta^1_1,    \ldots \ta^1_{m-1}, \ta^2_1, \ldots \ta^2_{m-1},   \theta) \ee
Then   if we  consider the annulus $A \subset  \R^{2m},  $
$$A =\{x \in \R^{2m},   a <  |x| < b\}, \quad  0  < a < b\} $$
a   function $u  \in  C^{2, \alpha} (\overline{A})$ which is invariant  under rotations in $y_1$ and
$y_2, $ ie.
\be
 u \in X = \left\{u \in C^{2, \alpha } (\overline{A}): u (x = w (|y_1, |y_2|) \right\} \ee
in the above  coordinates  will   depend only on   $r$ and  $ \theta,$ ie. $u =u (r, \theta)$

Therefore  for such  functions  the Laplace operator in $\R^{2m},$   written  in the above coordinates, reduces to

$$  \De_{\R^{2 m}} u \  \ = \  \ u_{r r} + \frac{(2m-1)}{r} u_r  + \frac{(m-1)}{r^2}  u_\theta \left[ \frac{\cos \theta}{\sin \theta}  - \frac{\sin \theta}{\cos \theta } \right]  + \frac{u_{\theta \theta}}{r^2} $$

Now we are ready  to prove Theorem 1.1.\\

\noi {\bf  Proof of Theorem 1.1}
Let  $u$ be a solution of (1.1) in $X$ and define the new variables:
\be
\rho =\frac{1}{2} r^2, \quad \varphi = 2 \theta  \ee
and the function
\be
 v (\rho, \varphi ) = u  (r (\rho), \theta  (\varphi )) =u  (\sqrt{2 \rho }, \frac{\varphi}{2 }) \ee
By  easy   computations we have
$$u_r = v_\rho \sqrt{2 \rho}, \quad  u_{rr} = 2  \rho v_{\rho \rho} + v _\rho $$
$$u_{\theta } = 2 v_\varphi , \quad  u_{\theta\theta} = 4  v _{\varphi \varphi} $$
Therefore , by (3.2) and (1.1) we get  that $v$ satisfies

$$ - 2 \rho \left[v_{\rho \rho} + \frac{m v_\rho}{\rho} + \frac{(m-1)}{\rho^2}  v_\varphi
\frac{\cos \varphi}{\sin \varphi} + \frac{v _{\varphi \varphi}}{\rho^2} \right]  = f (v), \quad
\rho \in \left( \frac{a^2}{2},  \frac{b^2}{2} \right) , \varphi \in [0, \pi] $$
Thus, by Remark 2.2 (with $N=m+1, \varphi_1 = \varphi, B=D) $ the function $v (\rho, \varphi)$  is an
axially  symmetric  solution  of  (1.2) in $D \subset \R^{m+1},$  i.e belongs to $Y$.  On the other hand starting, with an
axially symmetric solution $v = v (\rho, \varphi) $ of  (1.2) and defining $u(r, \theta)  = v (\rho (r), \theta (\varphi))$
with the same change of  variables   we get that $u \in X$ and is a solution of (1.1) invariant  by rotation in $y_1$ and $y_2$.
Hence the theorem holds.
\hfill{\qed}
\begin{remark}
 To understand better the   transformation (3.3)  let us   consider in $\R^{2m} $ the group action:
$$ T(x) = T (y_1, y_2) = (T_1 (y_1), T_2 (y_2)) , \quad   T_i  \in  O (m) , \ i=2. $$ It is  easy to see that  this action does not
have fixed  points in the annulus A.  Moreover the orbit  of a point $Q = (q_1, q_2) \in A, q_1 \in \R^m, q_2 \in \R^m$ is either $S^{m-1} \times  S^{m-1}$ if
$|q_i| \neq  0, i =1, 2,$ or just  $S^{m-1}$ if one  among $|q_1|$ or $|q_2|$ is zero.

\par Analogously we could  consider in $\R^{m+1} =\R^m \times  \R$  the group  action:
$$\tau (z) =\tau (z_1, \ldots, z_m, z_{m+1}) =(\tau_1 (z_1,\ldots z_m), z_{m+1}), \quad  \tau_1 \in (0 (m)).$$ In this case  all points  of the $z_{m+1}$ axis are fixed by this action.
Therefore  by the change  of variables (3.3)  used in the proof of Theorem 1.1 it is easy  to understand  that any   point
$P$ on the $z_{m+1} $ axis in $D \subset \R^{m+1},$ i.e any fixed point under   rotations   about the $z_{m+1} $ axis in the
annulus $D \subset \R^{m+1}$ is  mapped into  an $S^{m-1}$ orbit in $A \subset \R^{2m}.$ Indeed  $P$ has
spherical coordinates  in $\R^{m+1} $ equal to
$$P \equiv (\rho, 0, 0, \varphi), \mbox{  with }  \rho = |z_{m+1}|, \quad \varphi =0  \mbox{ or } \varphi =\pi.$$
and hence corresponds  in $\R^{2m} $ to points $Q$ with $\theta $ coordinate equal  to $0$ or $\frac{\pi}{2}.$  Therefore,
taking   in each $\R^m$  the spherical coordinates  (see the beginning of this section)
$$
(\rho_1, \theta^1_1, \ldots , \theta^1_{m-1} ),\quad  (\rho_2, \theta^2_1, \ldots, \theta^2_{m-1} ) $$
either  $\rho_1 $ or  $\rho_2$ (but never both!)  will be zero.
\end{remark}



\section{Concentrating Solutions}
\setcounter{equation}{0}

In this section we consider  the problem
\be \left\{\begin{array}{lllll}
-\De v  = \displaystyle{  \frac{1}{2|z|}  \left[ |v|^{p-1}  v -\la  v\right] \mbox{ in } D }\\
v =0 \mbox{ on  } \pa  D \end{array} \right. \ee
where $D =\{ z \in \R^N, R_1 < |z| <R_2\}  \quad0 < R^1 < R^2,  \quad
1 <p <\frac{N+2}{N-2}, \quad N \geq 3, \la >0. $
By known results we have
\begin{proposition}
For     every $ \la  >0$ problem  (4.1) has  a positive  solution  $v_\la$ and a sign changing   solution
$\tilde{v}_\la$ such that \\
i) $v_\la$  minimize the functional
\be
 J_\la  (u) =\int\limits_{D} \left[ \frac{1}{2} |\nabla v|^2 + \frac{\la  }{ 4   |z|} v^2 - \frac{ 1}{2 (\rho+1) |z|}
|v|^{p+1} \right]  \ee

 on the   Nehari manifold, in $H_0^1 (D),$
\be
N_\la =\{v \in  H_0^1  (D): v \neq 0,   \langle  J _\la^\prime (v), v  \rangle =0 \}  \ee

\noi (ii)  $\tilde{v}$ minimize $J_\la (v) $ in $H_0^1 (D) $ on the  nodal  Nehari set
\be
 N_\la^+ =\{ v \in H_0^1 (D): v^\pm \neq 0,  \langle J^\prime_\la,  v^\pm ,  v^\pm \rangle   =0 \}
\ee
where $v^\pm $ denotes either the positive or the negative part  of   $v$. \\

(iii)  $v_\la$ has Morse index 1 while  $\tilde{v}_\la$  has Morse index 2 and only two nodal regions.

(iv) $v_\la $ and $\tilde{v}_\la$ are  foliated  Schwarz symmetric.
\end{proposition}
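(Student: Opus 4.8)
\medskip
\noindent\textbf{Proof proposal.} The four assertions collect standard variational facts, and the plan is to assemble them. The structural point that makes everything work is that the weight $\frac{1}{2|z|}$ is smooth and bounded between two positive constants on $\overline D$ (because $D$ is an annulus with $R_1>0$), so it is irrelevant for compactness: since $p$ is subcritical, $H_0^1(D)\hookrightarrow L^{p+1}(D)$ is compact, the functional $J_\la$ of (4.2) is of class $C^2$ on $H_0^1(D)$, and $J_\la$ — as well as its restrictions to the Nehari-type sets — satisfies the Palais--Smale condition (from $J_\la(v)-\tfrac1{p+1}\langle J_\la'(v),v\rangle\ge c\,\|v\|_{H_0^1}^2$ one gets boundedness of $(PS)$ sequences, and the compact embedding upgrades weak to strong convergence). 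For (i) I would run the usual Nehari argument: $N_\la$ is a $C^1$ natural constraint, on it $J_\la(v)=\tfrac{p-1}{2(p+1)}\int_D\frac{|v|^{p+1}}{2|z|}$ is bounded below by a positive constant and coercive, hence $c_\la=\inf_{N_\la}J_\la$ is attained at a solution $v_\la$ of (4.1); replacing $v_\la$ by $|v_\la|$ (same energy, still in $N_\la$) one may take $v_\la\ge0$, and then $-\Delta v_\la+\frac{\la}{2|z|}v_\la=\frac{1}{2|z|}v_\la^{p}\ge0$ with $v_\la\not\equiv0$ forces $v_\la>0$ in $D$ by the strong maximum principle.

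For (ii) I would use the nodal Nehari set $N_\la^\pm$ of (4.4). For every $u$ with $u^\pm\neq0$ there is a unique pair $(s,t)\in(0,\infty)^2$ with $su^++tu^-\in N_\la^\pm$, and it realizes the strict global maximum of $(s,t)\mapsto J_\la(su^++tu^-)$ — the two Nehari equations decouple because $u^+$ and $u^-$ have disjoint supports, and $p>1$ gives the uniqueness. Hence $c_\la^\pm:=\inf_{N_\la^\pm}J_\la$ is a genuine two-parameter min--max value, $N_\la^\pm$ is a natural constraint, and by $(PS)$ the infimum is attained at a sign-changing solution $\tilde v_\la$. To see that $\tilde v_\la$ has exactly two nodal domains one argues by contradiction in the classical way: if there were three or more, pick a positive nodal domain $\Omega_1$, a negative one $\Omega_2$, put $u_i=\tilde v_\la$ restricted to $\Omega_i$ (extended by zero) and $u_3=\tilde v_\la-u_1-u_2\neq0$; each $u_i$ lies on $N_\la$, so $J_\la(u_i)\ge c_\la>0$, while $u_1+u_2\in N_\la^\pm$ and, by additivity over disjoint supports, $J_\la(u_1+u_2)=J_\la(\tilde v_\la)-J_\la(u_3)<J_\la(\tilde v_\la)=c_\la^\pm$, contradicting $J_\la(u_1+u_2)\ge c_\la^\pm$.

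For (iii), the second differential of $J_\la$ at any Nehari point $w$ satisfies
$$\langle J_\la''(w)w,w\rangle=-(p-1)\int_D\frac{|w|^{p+1}}{2|z|}<0 ;$$
taking $w=v_\la$ gives $m(v_\la)\ge1$, and taking $w=\tilde v_\la^+$ and $w=\tilde v_\la^-$ (which have disjoint supports, so they span a two-dimensional subspace on which the Hessian is negative definite, the mixed term vanishing) gives $m(\tilde v_\la)\ge2$. The matching bounds $m(v_\la)\le1$ and $m(\tilde v_\la)\le2$ follow from the fact that $v_\la$ sits at a mountain-pass (one-parameter min--max) level and $\tilde v_\la$ at a two-parameter min--max level, via the standard deformation lemma.

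Finally, for (iv) I would invoke Section 2. For $\tilde v_\la$: it is a least-energy nodal solution of (4.1), the nonlinearity $f(|z|,s)=\frac{1}{2|z|}(|s|^{p-1}s-\la s)$ is subcritical, and $s\mapsto f(|z|,s)/|s|$ is strictly increasing on $\R^+$ and on $\R^-$ — it equals $\frac{1}{2|z|}(s^{p-1}-\la)$ for $s>0$ and $\frac{1}{2|z|}(\la-|s|^{p-1})$ for $s<0$, both strictly increasing since $p>1$ — so Theorem 2.2 applies. For $v_\la$: since $v_\la>0$ it also solves $-\Delta v=\frac{1}{2|z|}(|v|^{p}-\la v)$, whose nonlinearity is convex in $v$ (because $|s|^{p}$ is convex for $p\ge1$) and has the same linearization at $v_\la$, hence the same Morse index $1\le N$; Theorem 2.1 then gives the foliated Schwarz symmetry of $v_\la$. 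In this programme the $(PS)$ condition, the Nehari fibering and the maximum principle are routine; the points that require care — all of them standard, cf.\ \cite{BWW} and the references therein — are the natural-constraint property of $N_\la^\pm$, the exact count of nodal domains, and the Morse-index upper bounds, together with the small device of rewriting the equation for $v_\la$ with the convex nonlinearity $|s|^{p}$ so that Theorem 2.1 covers the full range $1<p<\frac{N+2}{N-2}$ (for $1<p<2$ the nonlinearity $|s|^{p-1}s$ itself is not convex).
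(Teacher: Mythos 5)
Your proposal is correct and follows essentially the same route as the paper, whose proof of Proposition 4.1 consists precisely of citing standard critical point theory for (i), \cite{CCN} and \cite{BWW} for (ii)--(iii), and Theorems 2.1 and 2.2 for (iv) — i.e.\ exactly the Nehari/nodal-Nehari minimization, nodal-domain count, Morse index bounds and symmetry theorems you spell out. The one detail you add beyond the paper is the rewriting of the nonlinearity as $\frac{1}{2|z|}\left(|s|^{p}-\la s\right)$ for the positive solution so that the convexity hypothesis of Theorem 2.1 also covers $1<p<2$, a point the paper passes over in silence.
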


\begin{proof}
Since  $p < \frac{ N+2}{N-2} $ (i) is a standard result in critical point theory.  The existence of
$\tilde{v}_\la$ satisfying (ii) is proved in  \cite{CCN} and \cite{BWW}. The  Morse index claim  (iii) is again classical for
$v_\la$ and   proved in \cite{BWW} for $\tilde{v}_\la$  where it is also proved that $\tilde{v}_\la$ has only  two  nodal regions.  Finally the   foliated  Schwarz symmetry  of  $v_\la$  and
 $\tilde{v}_{\la} $ is a consequence  of (iii) and Theorem 2.1 and Theorem 2.2.
\end{proof}

We  are interested in the  asymptotic behaviour of $v_\la$ and   $\tilde{v}_\la$
  as $\la \rightarrow  +\infty.$
For the positive  solution $v_\la$  we have

\begin{theorem}
 For $\la$ sufficientlly large   :\\
(i) $v_\la$  has  only one local maximum point $P_\la \in D$ and $\sqrt{\la}  d  (P_\la, \pa D)
\rightarrow  +\infty $ as  $\la \rightarrow  \infty$, where   $d (\cdot, \rho, \pa D)$ denotes  the distance from $\pa D$. \\
(ii)  $P_\la$   belongs to the symmetry axis  of $v_\la, $
$$   P_\la     \rightarrow   P \in \{z \in  \pa D,  |z| = R_1\} \mbox{ and }  v_\la
\rightarrow   0 \mbox{ in }  C^1_{loc} (D \setminus  \{P\}),   \mbox{ as } \la \rightarrow  +\infty.$$
\end{theorem}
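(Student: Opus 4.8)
The plan is to combine an energy comparison for the mountain--pass (ground state) level $c_\la=\inf_{N_\la}J_\la$ with a blow--up analysis of the minimizer $v_\la$, adapting the arguments of \cite{NW} and \cite{DF} to the present weighted nonlinearity. The crucial structural observation is that on $\overline D$ one has $0<R_1\le|z|\le R_2$, so $\frac{1}{2|z|}$ is a smooth strictly positive coefficient; a standard rescaling $v(z)=\la^{1/(p-1)}\,W_0\big(\sqrt\la\,(z-Q)\big)$ near a would--be concentration point $Q\in\overline D$ produces the limiting profile equation $-2|Q|\,\De W_0+W_0=W_0^{\,p}$ in $\R^N$ (or in a half--space with zero boundary data if $Q\in\pa D$). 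Writing $W$ for the unique positive radial ground state of $-\De W+W=W^{\,p}$ in $\R^N$ (which exists and is nondegenerate because $p<\frac{N+2}{N-2}$) and $I$ for its standard energy, the least energy on the Nehari manifold of the profile equation equals $(2|Q|)^{\frac{N-2}{2}}I(W)$, and since $N\ge3$ the factor $(2|Q|)^{\frac{N-2}{2}}$ is strictly increasing in $|Q|$. This monotonicity is exactly what will force concentration on the inner sphere $\{|z|=R_1\}$.

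\emph{Step 1 (upper bound for $c_\la$).} I would first take as test function a truncated ground--state bubble $\eta_\la(z)\,W\big(\sqrt\la\,(z-Q_\la)\big)$ centred at a point $Q_\la$ with $|Q_\la|=R_1+\la^{-1/2}\log\la$, where $\eta_\la$ is a smooth cut--off vanishing near $\pa D$, and project it onto $N_\la$. Because $\sqrt\la\,d(Q_\la,\pa D)\to+\infty$, the cut--off and the slowly varying weight only produce lower--order corrections, and one obtains $c_\la\le\la^{\gamma}\big((2R_1)^{\frac{N-2}{2}}I(W)+o(1)\big)$ with $\gamma=\frac{2}{p-1}-\frac{N-2}{2}$.

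\emph{Step 2 (blow--up and location of $P$).} Setting $M_\la=v_\la(P_\la)=\max_{\overline D}v_\la$ and $\var_\la=M_\la^{-(p-1)/2}$, the rescaled functions $\hat v_\la(y)=M_\la^{-1}v_\la(P_\la+\var_\la y)$ solve $-\De\hat v_\la=\frac{1}{2|P_\la+\var_\la y|}\big(\hat v_\la^{\,p}-\la\var_\la^{\,2}\hat v_\la\big)$. The energy bound of Step 1 forces $M_\la\to+\infty$ and $\la\var_\la^{\,2}$ to remain bounded and bounded away from $0$ — if $\la\var_\la^{\,2}\to0$ the limit equation $-\De W_\infty=cW_\infty^{\,p}$ has no nontrivial finite--energy positive solution (Pohozaev), while $\la\var_\la^{\,2}\to\infty$ would force $M_\la$ bounded — so $\var_\la\sim\la^{-1/2}$, $M_\la\sim\la^{1/(p-1)}$, and by elliptic estimates $\hat v_\la\to W_\infty$ in $C^1_{\rm loc}$ with $W_\infty(0)=1$, $0\le W_\infty\le1$, $W_\infty$ solving a profile equation in $\R^N$ or in a half--space with Dirichlet data. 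If $\sqrt\la\,d(P_\la,\pa D)$ were bounded along a subsequence, $W_\infty$ would be a nontrivial nonnegative $H^1$ solution of $-\De W_\infty+cW_\infty=cW_\infty^{\,p}$ on a half--space with zero boundary data, which is impossible (moving planes forces monotonicity in the inner normal direction, incompatible with decay at infinity). Hence $\sqrt\la\,d(P_\la,\pa D)\to+\infty$, the rescaled domains exhaust $\R^N$, $W_\infty$ is — by Gidas--Ni--Nirenberg and Kwong's uniqueness — exactly $W(\cdot/\sqrt{2|P|})$ with $P=\lim P_\la$, and a concentration--compactness argument yields the matching lower bound $c_\la\ge\la^{\gamma}\big((2|P|)^{\frac{N-2}{2}}I(W)+o(1)\big)$. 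Comparing with Step 1 and using that $(2|Q|)^{\frac{N-2}{2}}$ is increasing forces $|P|=R_1$, i.e.\ $P\in\{z\in\pa D:\ |z|=R_1\}$.

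\emph{Step 3 (single peak, $C^1_{\rm loc}$ decay, symmetry axis), and the main obstacle.} From the $C^1_{\rm loc}$ convergence $\hat v_\la\to W$ and the decay of $W$, outside a ball $B_{R\var_\la}(P_\la)$ one has $v_\la^{\,p-1}\le\tfrac12\la$ for $R$ large, so there $-\De v_\la+\tfrac{\la}{4R_2}v_\la\le0$; a comparison (barrier) argument then gives $v_\la(z)\le CM_\la\,e^{-c\sqrt\la\,|z-P_\la|}$, whence $v_\la\to0$ in $C^1_{\rm loc}(D\setminus\{P\})$. In $D\setminus B_{R\var_\la}(P_\la)$ the function $v_\la$ is subharmonic (since $-\De v_\la<0$ there) and thus has no interior local maximum, while in $B_{R\var_\la}(P_\la)$ the $C^1_{\rm loc}$ convergence to $W$, which has a unique nondegenerate critical point, gives exactly one; this proves (i), together with $\sqrt\la\,d(P_\la,\pa D)\to+\infty$ already obtained. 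Finally, by Proposition 4.1 (iv) $v_\la$ is foliated Schwarz symmetric, hence nonincreasing in the polar angle about its axis, so its maximum over each sphere $\{|z|=t\}$ lies on the axis; being the unique global maximum, $P_\la$ lies on the symmetry axis, and $P$ lies on the axis and on the inner sphere. I expect the genuine difficulty to be the sharp two--sided estimate of $c_\la$ — particularly the lower bound showing that no energy escapes from the concentration point, so that the full value $(2|P|)^{\frac{N-2}{2}}I(W)$ is recovered — together with the half--space non--existence step that excludes concentration at distance $O(\la^{-1/2})$ from $\pa D$; it is precisely here that the weight $\frac{1}{2|z|}$ must be handled with care and that $R_1>0$ is used.
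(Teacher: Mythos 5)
Your proposal is correct and follows essentially the same route as the paper, whose proof of this theorem consists of invoking \cite{RS} (which adapts \cite{NW} and \cite{DF}): blow--up at the peak, the Esteban--Lions half--space nonexistence to get $\sqrt{\la}\,d(P_\la,\pa D)\to+\infty$, the energy comparison with the rescaled ground state $w_d$ whose energy is increasing in $d=|Q|$ to force concentration on the inner sphere, and foliated Schwarz symmetry to place $P_\la$ on the axis, exactly as in your Steps 1--3. The only remark worth adding is that your factor $(2|Q|)^{\frac{N-2}{2}}$ is the correct general--$N$ form of the scaling relation, of which the paper's formula (4.8), $I_{d}(w_{d})=\sqrt{2d}\,I(z)$, is the $N=3$ case; only the monotonicity in $d$ is actually used, so this does not affect the argument.
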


\begin{proof}
 The result (i) is proved in \cite{RS}, adapting a theorem of \cite{NW}.  The  location of $P_\la$ on the symmetry axis is a consequence  of the
foliated  Schwarz  symmetry  since it implies  that all critical points  of $v_\la$ are on the  symmetry  axis.  Finally the
convergence of $P_\la$ to a  point  on the inner boundary  and the concentration of $v_\la$ in $P$ have  been
 proved  in \cite{RS} again following  the proof of \cite{NW}  and \cite{DF} .
\end{proof}

To  study the  asymptotic   behaviour of the least energy nodal solution $\tilde{v}_\la$ of (4.1),  as $\la \rightarrow \infty$  we
adapt the proofs of    \cite{Nou-W} where the asymptotic behaviour  of the least  energy nodal solution is studied  for the
autonomous  singularly   perturbed   problem

\be \left\{ \begin{array}{llll}
 -\var^2 \De v  + v  = |v|^{p-1}  v  &\mbox{  in } &  \Om \\
 v   = 0  &\mbox{ on } \pa  \Om.  \end{array} \right. \ee
 in a smooth bounded  domain $\Om. $ We also  use the modifications of the proofs of \cite{NW} and \cite{DF}  made  in \cite{RS}.

\par To start with, let us  assume, without loss of generality, that the symmetry axis of $\tilde{v}_\la$ is the
$x_N$-axis. Then, as a consequence of the  foliated  Schwarz symmetry of  $\tilde{v}_\la$ we have that all critical
points belong to the $x_N$-axis and,  by  the monotonicity with respect   to the polar angle (see  Definition 2.2) all local
maximum points $P^+_\la$ are on the set $\{x  = (x_{1}, \ldots, x_N) \in D, x_N > 0\}$ while all local  minimum points   belong to the
set  $\{ x = (x_{1}, \ldots, x_N) \in D,   x_N  <0 \} $. This implies that   $|P^+_\la -P^-_\la | > 2 R_1$ for all such points so that
they cannot  converge to the same point as $\la \rightarrow  +\infty.$
We can prove
\begin{theorem}
(i) For $\la$ sufficiently large $\widetilde{v}_\la$ has only one positive local maximum  point  $P^+_\la$ and only  one
negative local minimum point $P^-_\la. $ Moreover $\sqrt{\la} d (P^\pm_\la , \pa D) \rightarrow \infty $ as
$\la
\rightarrow  \infty, $ as before  $d (., \pa D) $  denotes the distance from  $\pa D.$ \\

(ii)  Let $P^+$ and $P^-$ be the  limit points of $P^+_\la$ and $P^-_\la$ respectively as $\la \rightarrow   \infty.$ Then
$\tilde{v}_\la  \rightarrow  0$ in $C^1_{loc} (D \setminus \{P^+, P^-\})  $ \\

(iii) $P^+$ and $P^-$  belong  to set   $\{z \in \pa D, |z| =R_1\}$
\end{theorem}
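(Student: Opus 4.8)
The plan is to adapt the singular-perturbation analysis of \cite{Nou-W} (developed there for the autonomous problem (4.5)) together with the modifications introduced in \cite{RS} for the inhomogeneous weight $\frac{1}{2|z|}$, exploiting the foliated Schwarz symmetry from Proposition 4.1(iv) throughout. Writing $\var=\la^{-1/2}$, one rescales near a local extremum point so that, after translation and dilation, $\tilde v_\la$ is compared with the ground state $w$ of $-\Delta w + w = |w|^{p-1}w$ on $\R^N$; the weight $\frac{1}{2|z|}$ is essentially constant at scale $\var$ near any interior point and so contributes only lower order terms, which is exactly the point already handled in \cite{RS} for the positive solution. The energy upper bound for $\tilde v_\la$ on the nodal Nehari set $N_\la^+$ is, to leading order, twice the ground-state energy $c(x)$ evaluated at the concentration points, with $c(x)$ a decreasing function of $d(x,\pa D)$ forcing concentration at the boundary; the geometric separation $|P^+_\la-P^-_\la|>2R_1$ already noted in the text prevents the two peaks from interacting.

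First I would establish (i): a concentration-compactness / energy argument shows that $\tilde v^{\pm}_\la$ each carry energy bounded below by (essentially) the ground-state level, and since the total nodal energy is bounded above by twice that level plus $o(1)$, each nodal part can have only one peak; a standard argument (as in \cite{NW}, \cite{Nou-W}) then gives that the rescaled solutions converge in $C^2_{loc}$ to $w$, and a Pohozaev-type or direct ODE-in-the-radial-variable estimate (using the foliated Schwarz monotonicity, which pins all critical points on the axis) rules out additional critical points. The divergence $\sqrt\la\, d(P^\pm_\la,\pa D)\to\infty$ follows because if $\sqrt\la\, d(P^+_\la,\pa D)$ stayed bounded the rescaled limit would be a ground state on a half-space with zero Dirichlet data, whose energy strictly exceeds that on $\R^N$, contradicting the sharp energy upper bound. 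Next, (ii) is the by-now routine consequence: away from the finitely many peaks the solution is uniformly small (no energy is left over), and elliptic estimates upgrade smallness in $L^2$ or $H^1$ to $C^1_{loc}(D\setminus\{P^+,P^-\})$ smallness. Finally, for (iii) I would compare the energy of $\tilde v_\la$ with a test function built from a ground state bump placed near a boundary point at distance $\sim\var|\log\var|$ from $\pa D$: since the weight $\frac{1}{2|z|}$ is largest near $|z|=R_1$ (smallest $|z|$), the leading interaction/boundary correction term is minimized when the peaks approach $\{|z|=R_1\}$, so the minimizing configuration must have $P^{\pm}\in\{z\in\pa D:|z|=R_1\}$; the monotonicity in the polar angle confines $P^+$ to $\{x_N>0\}$ and $P^-$ to $\{x_N<0\}$, so they sit at (possibly different) points of the inner sphere.

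The main obstacle I anticipate is making the energy expansion sharp enough to simultaneously (a) separate the ``one peak per nodal domain'' case from the ``several peaks'' case and (b) read off the precise dependence on $d(\cdot,\pa D)$ and on the weight $|z|^{-1}$ so that concentration on the \emph{inner} boundary (rather than anywhere on $\pa D$) is forced. In the autonomous case of \cite{Nou-W} the relevant quantity is purely the boundary distance, whereas here the non-constant weight competes with it; one must check that near $|z|=R_1$ the gain from the larger weight $\frac{1}{2|z|}$ reinforces rather than fights the boundary-distance effect, and that the cross term between the two nodal bumps (which decays like $e^{-\sqrt\la\,|P^+_\la-P^-_\la|}$ and is therefore exponentially negligible given $|P^+_\la-P^-_\la|>2R_1$) can indeed be discarded at the order needed. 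Once the energy of the least-energy nodal solution is pinned between $2c_{R_1}+o(1)$ from above (via the test function) and $\ge 2c_{R_1}+o(1)$ from below (via the concentration analysis), all three statements follow by the arguments sketched above, exactly paralleling Theorem 4.1 for the positive solution.
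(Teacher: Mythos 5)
Your overall strategy is the one the paper follows: adapt the nodal analysis of \cite{Nou-W} and the modifications of \cite{RS} for the weight $\frac{1}{2|z|}$, rescale at the extremum points, use the foliated Schwarz symmetry to pin all critical points on the axis and keep the two peaks apart ($|P^+_\la-P^-_\la|>2R_1$), and decide the location of $P^\pm$ by an energy comparison with a suitable test function. Your sketches of (i) and (ii) are essentially what the paper does (Lemma 3.3 of \cite{Nou-W}, the exponential decay estimate of Proposition 3.4 of \cite{NW} as modified in \cite{RS}, then local elliptic estimates).

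Two points, however, are genuinely off, and the first touches the heart of (iii). You claim the limiting level $c(x)$ is ``a decreasing function of $d(x,\pa D)$ forcing concentration at the boundary''. That is not the mechanism, and taken literally it is false: for the autonomous Dirichlet problem the boundary-distance effect pushes peaks \emph{away} from $\pa D$, and here it only enters at exponentially small order (which is exactly why $\sqrt\la\, d(P^\pm_\la,\pa D)\to\infty$ while $P^\pm_\la$ still approaches $\pa D$). What forces the inner boundary is the weight alone, through the explicit scaling identity the paper exploits: the limit problem at a point with $|P|=d$ is $-\De w+\frac{1}{2d}w-\frac{1}{2d}w^p=0$ in $\R^N$, its ground state is $w_d(x)=z\bigl(|x|/\sqrt{2d}\bigr)$ with $z$ the standard ground state, so $I_d(w_d)=\sqrt{2d}\,I(z)$ is \emph{increasing in $d=|P|$}, not a function of $d(P,\pa D)$. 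Hence it is the leading-order term, not an ``interaction/boundary correction term'', that is minimized when $|P^\pm|$ is smallest, i.e.\ on $\{|z|=R_1\}$; your worry about whether the weight ``reinforces or fights'' the boundary-distance effect dissolves because the two effects act at different orders. The paper then makes this rigorous without any peak-interaction estimate: if $|P^+|=R_1+\al$ with $\al>0$, it places a cut-off copy of $w_{d_\al}$, $d_\al=R_1+\frac{\al}{3}$, rescaled by $\sqrt\la$ at $Q=(0,\dots,0,R_1+\frac{\al}{3})$, pairs it with a cut-off of $\tilde v^-_\la$ (disjoint supports are automatic since $|P^+_\la-P^-_\la|>2R_1$), adjusts the resulting sign-changing function to lie on the nodal Nehari set, and contradicts the least-energy property. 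Second, your justification of $\sqrt\la\, d(P^\pm_\la,\pa D)\to\infty$ via ``the half-space ground state has strictly larger energy'' is not correct as stated: the half-space limit problem $-\De u+\frac{1}{2|P^\pm|}(u-u^p)=0$, $u>0$, with zero Dirichlet data has \emph{no} solution at all (Theorem 1.1 of \cite{EL}), and it is precisely this nonexistence, after boundary straightening and blow-up as in Proposition 4 of \cite{RS}, that rules out a bounded rescaled distance.
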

\begin{proof}
 The  first part of  assertion $i$) is similar  to the proof   of Lemma  3.3 in  \cite{Nou-W}  which in turn, uses the same
argument of  \cite{NW} . The fact that $\sqrt{\la}  d (P^\pm_\la, \pa  D) \rightarrow  +\infty$ as $\la
\rightarrow +\infty$ can be deduced  as in \cite{RS} (Proposition 4 there) by using   a \lq\lq boundary straightening",  a
 rescaling  argument and the fact that limit problem
$$ \left\{ \begin{array}{lllll}
 -\De u + \frac{u}{2 |P^\pm|}  - \frac{u^P}{2 |P^{\pm}|} = 0  \mbox{ in } \Om \\
u > 0 \\
u =0  \mbox { on } \pa \Om \end{array} \right. $$
 does not  have solutions   if $\Om$ is the
half-space, by Theorem  1.1 of \cite{EL}.
\end{proof}

To prove ii)  we use the modification of Proposition 3.4   of  \cite{NW}  derived in \cite{RS} and apply it to the positive part
$\tilde{v}^+_\la$ and to the negative part $\tilde{v}_{\la}^-.$ This is based on the  comparison with the
known radial   solution $w_d$ of the problem.
\be \left\{ \begin{array}{llll}
-\De  w + \frac{1}{2d} w -\frac{1}{2d} w^P =0  \mbox{ in } \R^N \\[2mm]

w (x) \rightarrow  0 \mbox{ as } |x| \rightarrow  \infty \end{array} \right. \ee
where $d$ is either $|P^+|$ or $|P^-|$.
Thus, defining  $\tilde{w}^\pm_\la  (y) = \tilde{v}_\la  \left( P^{\pm}_\la  +
\frac{y}{\sqrt{\la} } \right)  $ we get, as in Proposition 3.4 of \cite{RS}: \\

 For any $\de \in (0, 1) \ \ \  \exists \ \  C >0$ such that
$$
 \tilde{w} ^\pm_\la (y)  \leq   C e^{ \frac{\sqrt{1-\de}}{R_2}} |y| $$
for $y\in \tilde{D}^+_\la =\left\{ y \in \R^N :   P^\pm_\la  + \frac{y}{\sqrt{\la}}   \in D \right\}$

Then, as in \cite{NW} we get the assertion ii) -  To conclude we prove that the limit points $P^+ $ and  $P^-$ belong to the
 inner  boundary  of $D =\{z \in \pa D, |z| = R_1\}$ i.e claim  iii).

To do this,  we simplify also the proof of \cite{RS} for the least energy  positive solution  exploiting the fact that  our   domain is an  annulus centred at the
origin. Let us   observe that the energy functional  (4.2) on the  solution $\tilde{v}_\la$ can be written as
$$J_\la (\tilde{v}_\la) =J_\la (\tilde{v}_\la^+) +J_\la  (\tilde{v}_\la ) $$
Then  by rescaling  $\tilde{v}_\la^\pm$ about  $P^\pm_\la $ in the usual way  we obtain, as in \cite{Nou-W}  and arguing as in \cite{NW}, that
$$
J_\la (u^\pm_\la ) =  I_{d^\pm} (w_{d^\pm} ) + o (1)  \mbox{ as } \la \rightarrow  \infty$$
where $w_{d^+}$ and $w_{d^-}$ are the positive solutions  of
\be \left\{ \begin{array}{llll}
-\De w + \frac{l}{2d} w -  \frac{1}{2d} w^p =0   \mbox{ in } \R^N \\[2mm]
 w(x) \rightarrow 0 \mbox{ as } |x| \rightarrow  \infty \end{array} \right. \ee

with  $d = d^+ = |P^+|$ or $d = d^- =  |P^-| $ and  $P^+_\la  \rightarrow   P^+, P^-_\la \rightarrow P^-$  respectively, and
$$
I_{d^\pm} \left( w_{d^\pm} \right) =\frac{1}{2} \int\limits_{\R^N}  |\na w_{d^\pm}| d x + \frac{1}{2d ^\pm}
\int\limits_{\R^N} \frac{1}{2} |w_{d^\pm} |^2 dx - \frac{ 1}{2d}
\int\limits_{\R^N}   \frac{1}{p+1} |w_{d^\pm} |^{p+1} dx $$
As  observed in \cite{RS} we have
\be
I_{d^\pm} (w_{d^\pm}) =  \sqrt{2d^\pm} I (z) \ee
where $z$ is the solution  of the equation
$$- \De z + z -z^p =0   \mbox{ on } \R^N$$

 since   $w_{d^\pm} (|x|) = z \left(\frac{|x|}{\sqrt{2d^\pm}} \right). $

\par From (4.8) it is easy to understand  that  in order to reduce the energy  the points    $P^\pm_\la$ should
converge to points $P^\pm$ in the annulus which have the smallest distance from the origin. These are in fact the points on the
 inner boundary.

To prove it  rigorously assume that one of the two   points  $\{P^\pm_\la\}$, say $P^+_\la$ converge   to a point $P^+ \in \overline{D}$ with   $|P^+| =R_1 +\alpha $ for some $\al >0.$ Then we could consider  the ball $B (Q, \al/3)$ with
   center in $Q= (0, \ldots,  0, R_1 +\frac{\al}{3}) $ and define the function
$$h^+_\la (x) =\varphi (x) w_{d_\al} ((x -Q) \sqrt{\la} ) \mbox{ for } x \in D.$$
where  $w_{d_\al}$ is the solution   of (4.6) with $d= d_\al = R_1 +
\frac{\al}{3}$  and $\varphi$ is a  suitable cut off function such that $h^+_\la  \in C^2_0
(B (Q, \al/ 3)).$
Hence  $h_\la^+ (x) $ is  just the solution of (4.6) (for  $(d_\al = R_1 +\frac{\al}{3})$ suitably translated  rescaled and  cut to be defined  in D. Then
\be
 J_\la (h_\la^+ )\rightarrow I_{d_\al} (w_{d_\al})=\sqrt{2d}_\al I (z) < \sqrt{2d^+} I (z)  \ee
 as  $\la   \rightarrow \infty$, by the  choice  of $d_\alpha.$
Then we consider  the function
$$h^-_\la (x) = \psi _\la  (x) \tilde{v}_\la^ -  (x) $$
where  $\psi_\la$ is a suitable cut off function such that $h^-_\la$ and $h^+_\la$ have disjoint supports. Note that
 this can be  always done since
$|P^+_\la -P^-_\la  |  > 2 R_1$ as pointed out before .

Finally, we can have that the function
$$h_\la (x) =h_\la ^+ (x) -h^-_\la (x)$$
belong to the  nodal Nehari set  (4.3). Then we  get
$$ J_\la (h_\la) \rightarrow I_{d_\al} (w_{d_\al}) + I_{d^- }(w_{d^-}) <  I_{d^+} + ( w_{d^+}) + I_{d^-} (w_{d^-} ) \mbox{ as }
\la  \rightarrow  \infty$$

by comparison with (4.8) and (4.9). For $\la$  large this contradicts  the fact that $\tilde{v}_\la$ is the
least energy nodal solution  as stated in  Proposition 4.1. Hence (iii) is proved.
\qed
\section{ Proofs of Theorem 1.2 and  Theorem 1.3}
 We start with the proof of Theorem  1.2 which,  at this stage, is a direct  consequence of the results of the
previous  sections.

 \noi {\bf Proof of Theorem 1.2} Let $1 <p < \frac{(m+1) +2}{ (m+1)-2}$ and  consider  problem (4.1) with such
 exponents  $p$ in the  annulus $D \subset \R^{m+1}, (i.e \  N =m+1,$
 in  Section 4) with  radii    $R_1 =\frac{a^2}{2},  R_2 =
\frac{b^2}{  2}$. By Proposition 4.1 there exist two families of solutions :$\{ v_\la\}$
 and  $\{ \tilde{v}_\la\}$ which are foliated Schwarz symmetric, hence, in   particular, they are axially symmetric and so belong
to the space $Y.$

\par Thus Theorem 1.1 applies and   we get families of solutions  $\{u_\la\}$ and $\{\tilde{u}_\la\}$ in $X$ for
problem  (1.3). Finally, by Theorem 4.2 and Theorem 4.3,  we have  that ${v_\la}$  concentrates in  a point $P_\la$   while
$\tilde{v}_\la$ concentrate  in two points   $P^+_\la $ and $P^-_\la.$ All   these points belong  to the symmetry axis, which
 is the  $z_{m+1} $ axis and converge to points $P, P^+$ and $P^-$  lying  on the inner boundary  of $D.$ Therefore by the  transportation   map  (3.3) and Remark  3.1
we get that $u_\la$ and $\tilde{u}_\la$ have the  claimed  concentration properties. \qed.

\par Next we prove that the Morse indices $m (u_\la)$ of $u_\la$ and   $ m (\tilde{u_\la})$   of $ \tilde{u}_\la$  tend  to
infinity  as $\la  \rightarrow  +\infty.$
To this  aim let us set
\setcounter{equation}{0}
\be
 L_{u_\la} = - \De + \la I -  pu_\la ^{p-1} I  \ee
and
\be
L_{\tilde{u}_\la} = -\De + \la I - p  |\tilde{u}_\la |^{p-1} I \ee

the linearised operators  at $u_\la$ and $\tilde{u}_\la $ and define the associated  quadratic forms:
\be
Q_{u_\la} (\psi) = \int\limits_A |\na \psi|^2 dx +\la  \int\limits_A  |\psi|^2 dx -p  \int\limits_A u^{P-1}_\la
  \psi ^2 dx \ee
for  $\psi \in  H_0^1 (A)$ and  $Q_{\tilde{u}_\la} (\psi) $ defined analogously.

Let us   denote by $\mu_j =\mu_j (\la)$ (respectively, $\tilde{\mu}_j = \tilde{\mu}_j (\la))$ the
 eigenvalues  of $L_{u_\la}$ (resp.  $L_{\tilde{u}_\la} $) in  $H^1_0 (A), j \in \N.$
We    have
\begin{lemma}  The eigenvalues $\mu_1, \tilde{\mu}_1, \tilde{\mu}_2$ tend  to $-\infty$ as $\la
\rightarrow  \infty.$ \end{lemma}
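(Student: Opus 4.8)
The plan is to transfer information from the reduced problem in $D \subset \R^{m+1}$ to the original problem in $A \subset \R^{2m}$ via the bijection of Theorem 1.1, and to exploit that the solutions $v_\la, \tilde v_\la$ concentrate as $\la \to +\infty$ (Theorems 4.2 and 4.3). First I would record the elementary but crucial fact that, since $u_\la \in X$ corresponds to $v_\la \in Y$ through the change of variables (3.3)--(3.4), a test function $\psi$ for $Q_{u_\la}$ that depends only on $(r,\theta)$ corresponds to a test function $\phi = \phi(\rho,\varphi)$ for the quadratic form of the linearised operator of the reduced problem (4.1), and — by the same computation that produced the factor $2\rho = 2|z|$ in the proof of Theorem 1.1 — the quadratic form $Q_{u_\la}(\psi)$ in $A$ equals, up to the positive Jacobian weight of the coordinate change, the corresponding quadratic form in $D$. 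Hence it suffices to show that the first eigenvalue of the linearised operator of (4.1) at $v_\la$ (resp. the first two at $\tilde v_\la$) tends to $-\infty$; negativity of one eigenvalue is already known (Morse index $1$, resp. $2$, from Proposition 4.1(iii)), so the content is the divergence to $-\infty$.

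Next I would estimate the first eigenvalue by a direct Rayleigh-quotient computation using the concentrating profile itself. By Theorem 4.2, $v_\la$ concentrates at a point $P$ on the inner boundary, and after the rescaling $y \mapsto P_\la + y/\sqrt\la$ the function $v_\la$ looks like the ground state $z$ of $-\Delta z + z - z^p = 0$ on $\R^{m+1}$ (or a half-space profile). Take as test function $\psi_\la(z) = v_\la(z)$ itself — or better, a rescaled bump $\eta\big(\sqrt\la (z - P_\la)\big)z_0(\sqrt\la(z-P_\la))$ built from the ground state and a fixed cut-off $\eta$. Since $-\Delta v_\la + \la v_\la = p\,\text{(weight)}\,|v_\la|^{p-1}v_\la + \text{l.o.t.}$ only in the weighted sense, the cleanest route is: in the rescaled variables the leading quadratic form behaves like $\la^{(\text{power})}\big[\int |\nabla z_0|^2 + \int z_0^2 - p\int z_0^{p+1}\big] + o(\cdot)$, and the bracket is strictly negative because $z_0$ solves the unscaled equation (so $\int|\nabla z_0|^2 + \int z_0^2 = \int z_0^{p+1}$, and multiplying the relevant term by $p > 1$ makes it negative). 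The prefactor power of $\la$ is positive, so $\mu_1(\la) \to -\infty$. For $\tilde v_\la$ one runs the same argument near $P^+_\la$ and near $P^-_\la$ separately; because $|P^+_\la - P^-_\la| > 2R_1$ (noted before Theorem 4.3) the two rescaled bumps have disjoint supports, giving a $2$-dimensional subspace on which $Q_{\tilde u_\la}$ is negative definite with both values $\to -\infty$, whence $\tilde\mu_1, \tilde\mu_2 \to -\infty$.

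The main obstacle I anticipate is making the asymptotics of the Rayleigh quotient rigorous under the inhomogeneous weight $1/(2|z|)$ and with the half-space (boundary) limit profile rather than the whole-space ground state: one must check that the weight $|z|^{-1}$ is essentially constant ($\approx (2R_1)^{-1}$) on the shrinking support of the test function, control the cut-off error, and keep track of the exact power of $\la$ so that the negative bracket is not killed by lower-order terms. All of these are the same estimates already used in Section 4 (boundary straightening, rescaling, convergence to the limit problem of \cite{EL}), so I would invoke those computations rather than redo them. A secondary, easier point is verifying that the weighted quadratic form in $D$ really is conjugate to $Q_{u_\la}$ on $A$ — this is immediate from differentiating the identity $v(\rho,\varphi) = u(\sqrt{2\rho},\varphi/2)$ exactly as in the proof of Theorem 1.1, the nonlinear term $pu_\la^{p-1}$ transforming into $p|v_\la|^{p-1}/(2|z|)$ precisely because $f$ appears divided by $2|z|$ in (1.2).
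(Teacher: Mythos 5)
Your conclusion is correct and your test functions could eventually be made to work, but you have taken a long detour where the paper uses an essentially two-line computation, and the detour as sketched has loose ends. The paper never passes to the reduced problem in $D$ and never invokes the concentration results of Section 4: it evaluates $Q_{u_\lambda}$ on $u_\lambda$ itself and, since $u_\lambda$ solves (1.3), gets $Q_{u_\lambda}(u_\lambda)=(1-p)\int_A\bigl(|\nabla u_\lambda|^2+\lambda u_\lambda^2\bigr)\,dx$, whence $\mu_1\le (1-p)\lambda\to-\infty$; for $\tilde\mu_1,\tilde\mu_2$ one tests with $\tilde u_\lambda^+$ and $\tilde u_\lambda^-$, which have disjoint supports, and argues the same way. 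You actually mention the analogous simple choice (testing with $v_\lambda$ itself in $D$) but discard it; had you kept it, the same algebra gives $Q(v_\lambda)=(1-p)\int_D\frac{|v_\lambda|^{p+1}}{2|z|}\,dz$ and, using the equation once more together with $|z|\le R_2$, the bound $\mu_1^D\le (1-p)\lambda/(2R_2)$ --- no asymptotics, rescaling, or concentration needed.

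The rescaled-bump route you chose instead is viable in principle but, as written, has genuine gaps in the bookkeeping. First, without the amplitude normalization $v_\lambda\sim\lambda^{1/(p-1)}\hat v_\lambda$ the potential term $p|v_\lambda|^{p-1}$ is not comparable to $\lambda$ on the shrinking support of the bump, and the negative ``bracket'' you rely on does not emerge; you never track this power of $\lambda$. Second, you mix profiles: you plug in the ground state $z_0$ of $-\Delta z+z=z^p$, while the rescaled limit of $v_\lambda$ is $w_d(x)=z\bigl(x/\sqrt{2d}\bigr)$, solving the equation with the weight $1/(2d)$; with this mismatch the bracket is not $(1-p)\int z_0^{p+1}$, so one must use $w_d$ itself (or redo the scaling), and one also needs the $C^1_{loc}$ convergence of the rescaled solution from \cite{RS}. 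Third, the transfer between $A$ and $D$ is not an equality of eigenvalue problems: the Rayleigh quotients differ by $|z|$-dependent weights, which are bounded above and below on $D$, so divergence to $-\infty$ does transfer, but this comparison has to be stated. All of this can be repaired (and your disjoint-support, two-dimensional subspace argument for $\tilde\mu_2$ is fine), but none of it is necessary for the lemma.
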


\begin{proof}
Let  us show it for $\mu_1.$ We evaluate  the quadratic form (5.3) on $u_\la$ itself. By the equation (1.3). We have

$$ Q_{u_\la} (u_\la) = (1-p) \int\limits_A (|\nabla u_\la)|^2 + \la  |u_\la|^2 ) dx $$
Hence

$$ \mu_1 \leq  \frac{Q_\la (u_\la )}{\int\limits_A |u_\la|^2 dx } =  (1-p)  \left[
\frac{\int\limits_A |\nabla u_\la|^2 dx }{\int\limits_A |u_\la |^2 dx } +\la \right]  \leq
 (1-p) \la \rightarrow -\infty  \mbox{ as } \la \rightarrow +\infty.   $$
The same holds  for $\tilde{\mu}_i, i =1, 2, $ using  $\tilde{u}^+_\la$ and $\tilde{u}_\la^-$ as test
functions  to evaluate the quadratic form .
\end{proof}

 To show the asymptotic behavior of the  Morse index of our solutions we
 construct  a sequence  $\{\Phi_k\}$  of
$L^2 $-orthogonal functions,   on which the  quadratic form (5.3) is negative for $\la$ large.

\par We need some  preliminary notations and remarks. As in Section 3 a point $x \in \R^{2m} = \R^m \times \R^m$ is
represented  by $x = (y_1, y_2), y_i \in \R^m,  i =1, 2.$ Then  $y_1 = (\rho_1, \sigma _1), \ \ \  y_2 = (\rho_2, \sigma_2)$ with
  $\rho_i  =|y_i|, \  \ \  \sigma_i \in S^{m-1} \subset \R^m  (y_i), \ \ \  i =1, 2 $ and  $\rho_1 =  r \cos \theta,  \
\rho_2 = r \sin \theta,     \  \
 r =|x|, \ \ \ta  \in [ 0, \frac{\pi}{2} ]$.  Thus we can represent $x \in \R^{2m} $ by
$$ x = (r, \sigma_1,  \sigma_2, \ta).$$

Then  the Laplace operator in $\R^{2m}$ can be  expanded as:
\be
\De_{R^{2m}}  u = u_{rr} +\frac{(2m -1)}{r} u_r + \frac{(m-1)}{r^2} u_\ta
\left[ \frac{2 \cos 2 \ta}{\sin  2 \ta} \right] +  \frac{u_{\ta \ta}}{r^2} +
\frac{1}{r^2 \cos^2 \ta }  \De^{\sigma_1}_{S^{m-1}}  u +  \frac{1}{r^2 \sin^2  \ta }
\De^{\sigma_2}_{S^{m-1}} u \ee
where  $\De ^{\sigma_i}_{S^{m-1}}  , i =1, 2$ is the Laplace-Beltrami   operator on $S^{m-1}$ in the
 $\sigma_i$ -variable.  Since the   solutions $u_\la$ and $\tilde{u}_\la$ are radially symmetric in $y_1$ and  $y_2.$, ie.
belong   to the  space $X$ (see 3.2) the linearized operators  are invariant under the same symmetry. Denoting
by $g_1 = g_1 (\la)$ (resp. $\tilde{g}_1 =\tilde{g}_1 (\la))$ the first $ L^2$- normalized eigenfunction of $L_{u_\la}$
(resp  $L_{\tilde{u}_\la})$ in $H^1_0 (A)$  easily have

\begin{lemma}
The eigenfunctions $g_1$ and $\tilde{g}_1$ belong to $H_0^1 (A) \cap X$ i.e  depend  only on $(r, \theta). $
\end{lemma}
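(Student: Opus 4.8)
The plan is to use the simplicity of the first Dirichlet eigenvalue together with the symmetry of the linearized operators. Since $u_\la \in X$, that is $u_\la(x) = w(|y_1|,|y_2|)$, the potential $pu_\la^{p-1}$ appearing in $L_{u_\la}$ is invariant under the group action $T(y_1,y_2) = (T_1y_1,T_2y_2)$, $T_i \in O(m)$, considered in Remark 3.1; as $-\De$ and the identity commute with the pullback by such an isometry, the whole operator $L_{u_\la}$ commutes with this action on $H^1_0(A)$. The same holds for $L_{\tilde u_\la}$, because $\tilde u_\la \in X$ too, so $|\tilde u_\la|^{p-1}$ is $O(m)\times O(m)$-invariant.

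Next I would invoke the classical fact that for a Schr\"odinger operator $-\De + V$ with $V \in C(\overline A)$ on the bounded domain $A$ with Dirichlet boundary conditions, the lowest eigenvalue is simple and an associated eigenfunction can be chosen strictly positive in $A$ (by the variational characterization and the strong maximum principle, or via Krein--Rutman). The potentials of $L_{u_\la}$ and $L_{\tilde u_\la}$ are bounded and continuous on $\overline A$, so $\mu_1$ and $\tilde\mu_1$ are simple and the $L^2$-normalized eigenfunctions $g_1$, $\tilde g_1$ may be taken positive.

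Then for any $T$ in the group, $g_1\circ T$ is again an $L^2$-normalized eigenfunction of $L_{u_\la}$ for $\mu_1$ (the $L^2$ norm is preserved because $T$ is orthogonal); by simplicity $g_1\circ T = \pm g_1$, and positivity forces $g_1\circ T = g_1$ for every $T$. Hence $g_1$ is $O(m)\times O(m)$-invariant, and since the orbit invariants of this action on $A$ are precisely $\rho_1 = |y_1|$ and $\rho_2 = |y_2|$ — equivalently the pair $(r,\theta)$ through $\rho_1 = r\cos\theta$, $\rho_2 = r\sin\theta$ — the function $g_1$ depends only on $(r,\theta)$, i.e. $g_1 \in H^1_0(A)\cap X$. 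The identical argument applied to $\tilde u_\la$, $\tilde g_1$, $\tilde\mu_1$ yields the claim for $\tilde g_1$.

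The only delicate ingredient is the simplicity of the first eigenvalue and the sign-definiteness of its eigenfunction; the rest is a soft averaging/symmetrization argument. One could alternatively drop positivity by using connectedness of $SO(m)\times SO(m)$ to rule out the $-1$ sign on that subgroup and then treating the reflections separately, but the positivity route is shorter.
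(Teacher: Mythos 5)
Your argument is correct and takes essentially the same route as the paper: both rest on the $O(m)\times O(m)$-invariance of $L_{u_\la}$ and $L_{\tilde u_\la}$ (since $u_\la,\tilde u_\la\in X$) combined with the simplicity of the first eigenvalue, which forces the first eigenfunctions to be invariant under the group and hence to depend only on $(r,\theta)$. You merely make explicit the sign step via positivity of $g_1,\tilde g_1$, which the paper leaves implicit in its appeal to uniqueness of the first eigenfunction up to normalization.
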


\begin{proof}

Since  $u_\la $ and $\tilde{u}_\la$ belong to $X$ then $L_{u_\la}$ and $L_{\tilde{u}_\la}$ are invariant by the same
symmetry. Therefore  the  symmetry of $g_1$ and $\tilde{g}_1$ derives by the uniqueness of the first  eigenfunction
(up to normalization). Indeed  the first eigenfunction of $L_{u_\la}$ and $L_{\tilde{u}_\la}$ in  $H_0^1 (A) $ and
$H_0^1 (A) \cap X $ must be the same.
 \end{proof}
Let us observe that, since $g_1$ depends only on $(r, \ta)$ it satisfies the problem
\be \left\{ \begin{array}{llll}
-(g_1)_{rr}  - \frac{2 (m-1)}{r} (g_1)_r - \frac{m-1}{r^2} (g_1)_\ta  \ \ \ \frac{2 \cos  2 \ta}{\sin 2 \ta}
- \frac{(g_1)_{\ta \ta}}{r^2} +\la  g_1  - p u_\la^{ p-1} g_1 =\mu_1 g_1\\[2mm]
g_1 =0  \quad \mbox{ on } \pa A \end{array}\right. \ee
The  analogous  statement holds for $\tilde{g}_1. $ Then  let  $\psi_k$ be  the  $k$-th  eigenfunction of
$-\De_{s^{m-1}}$  corresponding  to the eigenvalue $\nu_k  =k
 (k+m-2), m \geq 2, k \geq 1. $ We have
\begin{lemma}
Define for any  $k\geq 1 \quad$
\be
   \Phi^k  =g_1 (r, \ta) \left[ \cos^2\ta \psi_k (\sigma_1) +\sin^2 \ta \psi_k
(\sigma_2)\right]  \ee

Then, for  any $k\geq 1, $
$$ Q_{u_\la} (\Phi^k) <0 ,  \ \ \  \mbox{ for } \la  \ \ \   \mbox{sufficiently large.}$$

\end{lemma}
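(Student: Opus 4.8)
The idea is to plug the test function $\Phi^k$ directly into the quadratic form $Q_{u_\la}$, exploit the equation (5.6) satisfied by $g_1$, and show that the extra terms produced by the angular factor $[\cos^2\ta\,\psi_k(\sigma_1)+\sin^2\ta\,\psi_k(\sigma_2)]$ are lower-order in $\la$ compared with the leading term $\mu_1\int_A g_1^2[\cdots]^2$, which by Lemma 5.1 goes to $-\infty$. More precisely, first I would expand $Q_{u_\la}(\Phi^k)$ using the expression (5.4) for $\De_{\R^{2m}}$ in the coordinates $(r,\sigma_1,\sigma_2,\ta)$. Since $g_1=g_1(r,\ta)$, integrating by parts (or just using the weak form of (5.6) against $g_1\cdot[\cdots]^2$) gives
$$
Q_{u_\la}(\Phi^k)=\mu_1\int_A g_1^2\,[\cos^2\ta\,\psi_k(\sigma_1)+\sin^2\ta\,\psi_k(\sigma_2)]^2\,dx \;+\; R_k(\la),
$$
where $R_k(\la)$ collects all the terms where derivatives land on the angular factor rather than on $g_1$, i.e. the Laplace–Beltrami contributions $\tfrac1{r^2\cos^2\ta}\De^{\sigma_1}_{S^{m-1}}$ and $\tfrac1{r^2\sin^2\ta}\De^{\sigma_2}_{S^{m-1}}$ acting on $\cos^2\ta\,\psi_k(\sigma_1)+\sin^2\ta\,\psi_k(\sigma_2)$, together with the $\ta$-derivative cross terms between $g_1$ and $\cos^2\ta,\sin^2\ta$.

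The second step is to bound $R_k(\la)$ by $C_k\int_A g_1^2$ with $C_k$ independent of $\la$. This is where the precise choice of angular weights $\cos^2\ta$ and $\sin^2\ta$ matters: on $S^{m-1}$ in the $\sigma_1$ variable we have $\De^{\sigma_1}_{S^{m-1}}\psi_k=-\nu_k\psi_k$, so the term $\tfrac1{r^2\cos^2\ta}\De^{\sigma_1}_{S^{m-1}}(\cos^2\ta\,\psi_k(\sigma_1))=-\tfrac{\nu_k}{r^2}\psi_k(\sigma_1)$, and the singular factor $\cos^{-2}\ta$ is exactly cancelled; similarly for the $\sigma_2$ piece the $\sin^{-2}\ta$ is cancelled by the $\sin^2\ta$ weight. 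Since $r$ ranges over the compact interval $[a,b]$ bounded away from $0$, and the remaining $\ta$-derivative terms involve only $\ta$-derivatives of $\cos^2\ta$ and $\sin^2\ta$ (which are smooth and bounded on $[0,\pi/2]$) paired against $(g_1)_\ta$ and $(g_1)_r$, a Cauchy–Schwarz / weighted-average estimate shows each such term is controlled by $\varepsilon\int_A|\nabla g_1|^2+C_\varepsilon\int_A g_1^2$, hence (using that $g_1$ has bounded Rayleigh quotient, or absorbing against $\mu_1$) by a $\la$-independent multiple of $\int_A g_1^2$. Thus $R_k(\la)\le C_k\int_A g_1^2$.

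The third step is to compare the two integrals. Because $\psi_k$ has zero average on $S^{m-1}$ for $k\ge1$, one checks that $\int_A g_1^2[\cos^2\ta\,\psi_k(\sigma_1)+\sin^2\ta\,\psi_k(\sigma_2)]^2\,dx = c_k\int_A g_1^2\cdot(\text{positive angular integral})$, a fixed positive multiple $\kappa_k>0$ of $\int g_1^2$ after carrying out the $\sigma_1,\sigma_2$ integrations (the cross term $\int_{S^{m-1}}\psi_k$ vanishes, leaving $\cos^4\ta\,\|\psi_k\|_{L^2}^2+\sin^4\ta\,\|\psi_k\|_{L^2}^2$ times the radial density, which is strictly positive). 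Therefore
$$
Q_{u_\la}(\Phi^k)\le \kappa_k\,\mu_1(\la)\int_A g_1^2 + C_k\int_A g_1^2 = \big(\kappa_k\,\mu_1(\la)+C_k\big)\int_A g_1^2,
$$
and since $\mu_1(\la)\to-\infty$ by Lemma 5.1 while $\kappa_k,C_k$ do not depend on $\la$, the right-hand side is negative for $\la$ large, proving the claim. The main obstacle is the second step: making sure that the singular weights $\cos^{-2}\ta,\sin^{-2}\ta$ coming from the spherical Laplacian are genuinely neutralized by the chosen angular profile and that no hidden $\la$-dependence creeps into $C_k$ — this is exactly why the weights $\cos^2\ta$ and $\sin^2\ta$ (rather than, say, constants) were built into the definition (5.7) of $\Phi^k$.
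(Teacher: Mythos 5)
Your overall strategy is the same as the paper's: test the quadratic form with $\Phi^k$, use the eigenvalue equation (5.6) for $g_1$, observe that the weights $\cos^2\ta,\sin^2\ta$ cancel the singular factors $\frac{1}{r^2\cos^2\ta},\frac{1}{r^2\sin^2\ta}$ in front of the Laplace--Beltrami operators (leaving only $\frac{\nu_k}{r^2}$-terms), use the zero mean of $\psi_k$ and $\cos^4\ta+\sin^4\ta\geq\delta>0$ to get the leading term $\mu_1\int_A|\Phi^k|^2\leq\mu_1\delta$, and conclude from $\mu_1(\la)\to-\infty$ (Lemma 5.1). Up to there the proposal matches the paper.

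The gap is in your second step, in the treatment of the cross term where a $\ta$-derivative hits $g_1$, i.e. the term of the type $\int_A\frac{2}{r^2}(g_1)_\ta\, g_1\,\sin 2\ta\,[\psi_k(\si_2)-\psi_k(\si_1)][\cos^2\ta\,\psi_k(\si_1)+\sin^2\ta\,\psi_k(\si_2)]\,dx$. You estimate it by Cauchy--Schwarz as $\varepsilon\int_A|\na g_1|^2+C_\varepsilon\int_A g_1^2$ and then claim this is $\la$-independent because ``$g_1$ has bounded Rayleigh quotient, or absorbing against $\mu_1$''. Neither is justified: $g_1$ is the first eigenfunction of $-\De+\la-pu_\la^{p-1}$, so $\int_A|\na g_1|^2=\mu_1-\la+p\int_A u_\la^{p-1}g_1^2$, and since $\|u_\la\|_\infty^{p-1}$ grows like $\la$ for these concentrating solutions, $\int_A|\na g_1|^2$ is not uniformly bounded in $\la$ (it may grow like $\la$). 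Absorbing $\varepsilon\int_A|\na g_1|^2$ into $\mu_1\delta$ would then require a quantitative comparison between the rate at which $\mu_1\to-\infty$ and the constant in a gradient bound for $g_1$, which you do not provide and the paper does not need. The paper sidesteps this entirely by writing $2g_1(g_1)_\ta=(g_1^2)_\ta$ and integrating by parts in $\ta$, so that the derivative falls on the smooth, $\la$-independent angular factor; every remaining term is then bounded by $C_k\int_A g_1^2=C_k$ with $C_k$ independent of $\la$, and the conclusion $Q_{u_\la}(\Phi^k)\leq\mu_1\delta+C_k<0$ for large $\la$ follows. (A minor additional slip: no cross terms with $(g_1)_r$ occur, since the angular factor does not depend on $r$.) With the integration-by-parts trick replacing your Cauchy--Schwarz step, your argument becomes the paper's proof.
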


\begin{proof}

 Note  that, by Lemma 5.2,  only $g_1$ depends  on $r$ while only           $\psi_k$ depends  on $\sigma_1$ or $\sigma_2$. Hence, the relevant terms in evaluating the quadratic form come only  from the $\theta$ derivatives. Then   by (5.4), (5.5), since  $\psi_k (\sigma_1), \psi_k (\sigma _2)$ are  eigenfunctions  of
$-\De_{ s^{m-1}}$ corresponding  to the same   eigenvalue $\nu_k$ we obtain
$$L_{ u_{\la}}  \Phi^k = \mu_1  \Phi^k  + \frac{\nu_k}{r^2} g_1  \psi_k (\sigma_1) +
\frac{\nu_k}{r^2} g_1 \psi_k (\sigma_2) - \frac{(m-1)}{r^2} g_1 2 \cos 2 \theta
[\psi_k (\sigma_2) - \psi_k (\sigma_1) ]$$
$$+ \frac{2}{r^2} [(g_1)_\theta \sin  2 \theta + g_1 \cos 2 \theta ]
[\psi_k (\sigma_2) -\psi_k (\sigma_1)] $$
Using that $g_1,\psi_k$ can be taken as $L^2$-normalized  and that $\psi_k$ has mean value
 zero  on $S^{m-1}$, i.e
$$
\int\limits_A g_1^2 =1 \quad \int\limits_{S^{m-1}} |\psi_k|^2  =1 \mbox{ and } \int\limits_{S^{m-1}} \psi_k =0$$
and multiplying by $\Phi^k$ and integrating  we get:
$$Q_{u_\la} (\Phi^k) = \langle L_{u_\la} \Phi^k, \Phi^k \rangle =\mu_1 \int\limits_A  |\Phi^k|^2 dx +
\nu_k \int\limits_A \frac{1}{r^2} g_1^2 [\psi^2_k (\sigma_1) \cos^2 \ta   +\psi^2_k (\sigma_2)
\sin^2 \ta ] dx + $$
$$ -2 (m-1) \int\limits_A \frac{1}{r^2}  g_1^2 \cos 2 \ta [\psi_k (\sigma_2) -\psi_k (\sigma_1)]
[\cos^2 \ta  \ \psi_k (\sigma_1) + \sin^2 \ta   \ \psi_k (\sigma_2)] dx $$
  $$+  \int\limits_A \frac{2}{r^2}  (g_1)_\ta  \cdot   g_1 \sin 2 \ta
 [\psi_k (\sigma_2) -\psi_k (\sigma_1)]
    [\cos^2 \ta \   \psi_k  (\sigma_1) +\sin^2  \ta \   \psi_k (\sigma_2)] dx $$
$$+\int\limits_A \frac{2}{r^2} (g_1)^2 \cos 2 \ta  [(\psi_k (\sigma_2) -\psi_k (\sigma_1)] [\cos^2 \ta   \ \psi_k (\sigma_1) +
\sin^2 \ta \   \psi_k (\sigma_2)]dx $$
For the first term we have, by the previous  remarks, \\
$$\int\limits_{A} |\Phi^k|^2 =\int\limits_A g_1^2 [\cos^4 \ta +\sin^4 \ta ]\geq  \de > 0 $$
where   $\de =\min\limits_{[0, \pi/2]}(sin^4 \ta +\cos^4 \ta) >0$
Then, taking  into account that $2(g_1)_\ta\cdot g_1 =(g_1^2)_\ta$, we finally  get
$$
Q_{u_\la} (\Phi^k) \leq \mu_1 \de + C_k \int\limits_A |g_1|^2  =\mu_1 \de + C_k $$
for some constant $C_k$ independent of $\la.$ Since $\mu_1 =\mu_1 (\la) \rightarrow  -\infty$ as  $\la
 \rightarrow \infty$ we get the
assertion .
\end{proof}

Of course the statement  of Lemma 5.3 holds also if we substitute $g_1$ with $\tilde{g}_1.$ Thus

\noi {\bf Proof of Theorem 1.3.}
We consider the sequences $\Phi^k$ defined by (5.8) and $\tilde{\Phi}^k =\tilde{g}_1 (r, \ta) [\cos^2 \ta  \ \psi_k
(\sigma_1) + \sin^2 \ta \   \psi_k (\sigma_2)], k \geq 1$ and observe that
$$
\int\limits_A \Phi^k\Phi^j  dx =\int\limits_A \tilde{\Phi}^k. \tilde{\Phi}^j =0  \mbox{ for } j \neq k$$

By Lemma 5.3, for any $k \geq 1 $ there exists $\la (k) $ such that $Q_{u_\la} (\Phi^k ) < 0 $ for  $\la  >\la (k). $ Thus  for $\la \rightarrow  -\infty,$ the Morse index $m (u_\la)$ tends to infinity.The same applies to $m (\tilde{u}_\la).$

\qed

\end{document}